\newtheorem{proposition}{Proposition}
\newtheorem{theorem}{Theorem}
\newtheorem{lemma}{Lemma}
\newtheorem{corollary}{Corollary}
\theoremstyle{definition}
\newtheorem{definition}{Definition}
\newtheorem{remark}{\noindent Remark}
\newcommand{\eps}{\varepsilon}
\newcommand{\oorb}{\mathrm{ORB}}
\newcommand{\T}{\mathcal{T}}
\newcommand{\OT}{\mathcal{OT}}
\newcommand{\OTP}{\mathcal{OTP}}
\newcommand{\ot}{\mathrm{ot}}
\newcommand{\otp}{\mathrm{otp}}
\newcommand{\cotp}{\mathrm{cotp}}
\newcommand{\ii}{\mathfrak{I}}
\newcommand{\iz}{\mathcal{I}}
\newcommand{\uag}{UA}
\title{Combinatorial invariants of metric filtrations and automorphisms; the universal adic graph\thanks{Supported by the RSF grant 17-71-20153.}}
\author{A.~M.~Vershik\thanks{%
St.~Petersburg Department of Steklov Institute of Mathematics,
St.~Petersburg State University, and Institute for Information Transmission Problems.
E-mail: {\tt avershik@gmail.com}.} \and P.~B.~Zatitskiy\thanks{%
St.~Petersburg State University and St.~Petersburg Department of Steklov Institute of Mathematics.
E-mail: {\tt pavelz@pdmi.ras.ru}.} }
\date{}
\begin{document}

\maketitle

\tableofcontents

\abstract{We suggest a combinatorial classification of metric filtrations in measure spaces; a complete invariant of such a filtration is its combinatorial scheme, a measure on the space of hierarchies of the group~$\mathbb Z$. In turn, the notion of combinatorial scheme is a source of new metric invariants of automorphisms approximated via basic filtrations. We construct a universal graph endowed with an adic structure such that every automorphism can be realized in its path space.}

\section{Introduction}
\subsection{Metric filtrations and their applications}

This paper deals with applications of the theory of metric filtrations (see~\cite{V17} and the references therein) to uniform approximation of automorphisms of measure spaces and the analysis of adic transformations in path spaces of graphs. Conceptually, it is closely related to the first author's work on dyadic and homogeneous sequences of measurable partitions (= filtrations), standardness, the ``scale'' metric invariant, etc.\ (see
\cite{V68,V70,V71,V73f,V73d,V94,VG07}). Essentially, we pass from homogeneous filtrations to arbitrary ones, pose a number of problems on so-called {\it combinatorially definite} (standard) filtrations, and relate them to properties of automorphisms being approximated. On the other hand, the main example of filtrations is provided by so-called tail filtrations in path spaces of graded graphs, or, equivalently, spaces of realizations of Markov chains, so we arrive at a realization of automorphisms as adic transformations (see \cite{V81, V82}).

It is characteristic of numerous classification problems in ergodic theory that the most important objects (automorphisms, group actions) have no nontrivial finite invariants, i.e., metric invariants arising from finite approximations or finite projections. This means that classification problems are of purely asymptotic nature. An illustration of this point is, for example, the classical Rokhlin's lemma, which says that every aperiodic automorphism (more exactly, every free action of the group~$\mathbb Z$ with invariant measure) can be approximated with any accuracy in all reasonable metrics by periodic automorphisms. Hence, to obtain nontrivial invariants of automorphisms, one should  consider infinite sequences of periodic approximations and their invariants.

There are two theories of infinite approximations by periodic transformations: the theory of weak approximations, in the weak (operator) topology, successfully developed in the 1960s--1970s by A.~Katok, A.~Stepin, and others (see  \cite{KS,KH,St71}), and the theory of uniform approximations, in the uniform metric, initiated by the first author in the 1960s simultaneously with the theory of filtrations, i.e., decreasing sequences of $\sigma$-algebras or measurable partitions, see the references above. The theory of uniform approximations and orbit theory were the main applications of the theory of filtrations. Another important area of application for the theory of filtrations, which we do not touch upon in this paper, is the theory of stationary filtrations arising as decreasing sequences of $\sigma$-algebras of ``pasts'' of stationary random processes, or, which is the same, sequences of preimages of the full $\sigma$-algebra under powers of faithful endomorphisms.

In this paper, we explain that a monotone sequence of uniform approximations of an automorphism in a measure space determines,  in a natural way,  a filtration whose partitions are orbit partitions of periodic automorphisms. This filtration is special in the sense that it is endowed with an order and is semihomogeneous; in other words, it inherits two approximation structures: a~coherent ordering of points in almost all elements of all partitions (a linear order in the group~$\mathbb Z$) and semihomogeneity, i.e., the uniformness of almost all conditional measures, which follows from the invariance of the measure under the automorphism. In terms of the theory of graded graphs, this means that the graph is endowed with a structure of a linear order on the edges entering each vertex (an ``adic structure''), and that the measure on the path space is central, i.e., the conditional measure on initial segments of paths is uniform. Moreover, a semihomogeneous filtration endowed with such an order uniquely determines the corresponding automorphism (without order, one cannot recover the automorphism from the filtration up to isomorphism).  From this viewpoint, the filtration approach and uniform approximation are related to the problem of metric isomorphism more closely than weak approximation. The study and construction of invariants of automorphisms and groups of automorphisms is preceded by the study of invariants of filtrations.

 {\it All metric invariants of filtrations fall into two classes: combinatorial (finite) invariants and transfinite ones}. Combinatorial invariants are invariants of all finite fragments of filtrations, i.e., invariants of periodic approximations; they are described below and represent some measures on the space of hierarchies on the group~$\mathbb Z$. The prospect of obtaining an efficient combinatorial classification of filtrations described below was observed in~\cite{V17}, but the fact that this classification problem has indeed turned out to be tame gives hope for further classifications.

Transfinite invariants, whose existence is not obvious, are not combinatorial; their study requires considering deeper properties of filtrations, related to the notion of standardness or combinatorial definiteness. Standard, or combinatorially definite, filtrations are filtrations that are uniquely determined up to metric isomorphism by the combinatorial invariants. For example, a dyadic standard filtration is a Bernoulli filtration, it is combinatorially definite and can be recovered from its one-dimensional distribution.

By the lacunary theorem (see~\cite{V68,V17} and Section~4), every filtration contains a ``thinning''  that is already a combinatorially definite filtration; thus, every automorphism becomes combinatorially definite with respect to some ``thinning.'' So, we obtain a chain of invariants also for filtrations that are not combinatorially definite, and for general automorphisms.

The adic dynamics (see \cite{V81,V82}), i.e., a special transformation of the path space of a graded graph, has already given many new nontrivial examples of dynamical systems. For instance, the Pascal automorphism (whose spectrum is still unknown) is combinatorially definite in the sense of this paper. In general, adic transformations are of great interest both from theoretical and practical point of view. Here we suggest constructions of universal graphs on which every automorphism can be realized as an adic shift. The study of specific automorphisms has already begun (see the survey~\cite{BK15}, and also~\cite{GJ02}), but, according to the conclusion of the paper~\cite{V17} (see also below), the problem of classification of combinatorially definite filtrations is tame, i.e., there is a manageable space of classes, or orbits, or complete metric invariants of such filtrations (see the definition of combinatorial schemes of hierarchies below). This gives a chance to obtain a reasonable combinatorial classification of measure-preserving automorphisms. One may hope that such an approach is also possible for actions of other (amenable) groups, primarily for locally finite groups, such as~$\sum {\mathbb Z}_p$, and for the lattices~${\mathbb Z}^d$.

\subsection{Rokhlin's lemma, a sequence of uniform approximations, basic filtrations}

Recall,  in a form suitable for our purposes,  the well-known Rokhlin's lemma on approximation of a measure-preserving aperiodic automorphism by periodic automorphisms in the uniform metric. The uniform metric on the space of transformations of a space~$X$ preserving a measure~$\mu$ is defined as follows:
given transformations~$T$ and~$\tilde T$,
$$\rho(T,\tilde T)\equiv \mu\{x\in X: Tx \ne \tilde Tx\}.$$

\begin{lemma}[Rokhlin's lemma]
For every $\varepsilon >0$, every positive integer $n \in \mathbb N$, and every measure-preserving aperiodic automorphism~$T$ in a Lebesgue space $(X,\mu)$ there exists a periodic automorphism~$T_n$ with period~$n$ such that
$$ \rho(T,T_n)<\frac{1}{n}+\varepsilon.$$
\end{lemma}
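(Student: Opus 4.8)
The plan is to prove the standard Rokhlin tower lemma, then read off the stated estimate.

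The plan is to reduce the estimate to the Kakutani--Rokhlin tower lemma and then read off the bound from the geometry of a single tower. First I would produce, for the given $n$, a measurable base $B\subset X$ whose levels $B,TB,\dots,T^{n-1}B$ are pairwise disjoint and whose error set $E:=X\setminus\bigsqcup_{j=0}^{n-1}T^{j}B$ satisfies $\mu(E)<\varepsilon$. Because $T$ preserves $\mu$, every level has measure $\mu(B)$, so $n\,\mu(B)+\mu(E)=1$ and therefore $\mu(B)=(1-\mu(E))/n<1/n$; this is where the $1/n$ in the final bound will come from.

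Granting the tower, I would define $T_n$ by wrapping the column into a single $n$-cycle: put $T_n x=Tx$ for $x$ in the lower levels $\bigsqcup_{j=0}^{n-2}T^{j}B$ (one step up the tower) and $T_n x=T^{-(n-1)}x$ for $x\in T^{n-1}B$ (the top level is sent back to the base). This makes every orbit inside the column periodic of length exactly $n$ and keeps $T_n$ measure-preserving there, since all levels have the same measure $\mu(B)$. On the leftover set $E$, which with the restricted measure is again a Lebesgue space, I would fix any measure-preserving automorphism of period $n$ (for instance, split $E$ into $n$ parts of equal measure and permute them cyclically), thereby extending $T_n$ to a measure-preserving automorphism of all of $X$ with period $n$.

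By construction $T_n$ coincides with $T$ on the lower levels $\bigsqcup_{j=0}^{n-2}T^{j}B$, so the two maps can disagree only on the top level $T^{n-1}B$ and on $E$. Hence
$$\rho(T,T_n)\le \mu(T^{n-1}B)+\mu(E)=\mu(B)+\mu(E)=\frac{1-\mu(E)}{n}+\mu(E)=\frac1n+\frac{n-1}{n}\,\mu(E)<\frac1n+\varepsilon,$$
which is exactly the claimed estimate.

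The one genuinely nontrivial step, and the one I expect to be the main obstacle, is the tower lemma itself, which is precisely where aperiodicity of $T$ is used. I would obtain it in two stages. First, aperiodicity yields an \emph{elementary} tower: a set $A$ of positive measure whose translates $A,TA,\dots,T^{N-1}A$ are disjoint for a large height $N\gg n$, and a maximality (exhaustion) argument enlarges the base until the tall column covers all but a fraction less than $\varepsilon/2$ of $X$. Second, I would slice this tall column into floors of height $n$, keeping the levels $0,n,2n,\dots$ as the new base and discarding only the top remnant of each column, whose relative measure is at most $n/N$; taking $N>2n/\varepsilon$ makes the total uncovered measure less than $\varepsilon/2+n/N<\varepsilon$, producing the required height-$n$ tower. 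Everything after the tower is routine bookkeeping of symmetric differences.
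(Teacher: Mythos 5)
The paper does not actually prove this lemma (it is recalled as classical background), so your proposal has to be judged on its own terms. The second half of your argument is fine: given a height-$n$ Rokhlin tower with base $B$ and error set $E$ of measure less than $\varepsilon$, the wrap-around map $T_n$, the cyclic permutation on $E$, and the computation $\rho(T,T_n)\le\mu(B)+\mu(E)=\frac1n+\frac{n-1}{n}\mu(E)<\frac1n+\varepsilon$ are all correct. The genuine gap is in the step you yourself flagged as the crux, and it is not just an omitted proof but a false claim: you assert that a maximality (exhaustion) argument produces a base $A$ whose single column $A,TA,\dots,T^{N-1}A$ covers all but $\varepsilon/2$ of $X$. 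That assertion \emph{is} Rokhlin's lemma (with parameters $N$ and $\varepsilon/2$), so it cannot be taken as an input; and maximality does not deliver it. If $A$ is maximal among sets whose first $N$ iterates are pairwise disjoint, all one can conclude is that $\bigcup_{|m|\le N-1}T^{m}A$ is conull, whence $\mu\bigl(\bigcup_{j<N}T^{j}A\bigr)=N\mu(A)\ge N/(2N-1)$, which is barely more than $1/2$ no matter how large $N$ is. This bound is sharp: take $T=S\times R$ with $S$ aperiodic and $R$ the cyclic rotation on $\mathbb{Z}/(2N-1)\mathbb{Z}$, and let $A=Y\times\{0\}$, where $Y$ is the whole first factor. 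Then $A$ is maximal, yet its column covers exactly $N/(2N-1)$ of the space. So ``enlarging the base until the tall column covers all but $\varepsilon/2$'' is impossible in general, and slicing that column into floors of height $n$ cannot recover the lost mass.

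The standard repair keeps your slicing idea but applies it to a different object. Maximality does give two usable facts: $\mu(A)\le 1/N$ (disjointness of $N$ levels of equal measure), and the saturation $\bigcup_{m\in\mathbb{Z}}T^{m}A$ is conull, i.e., $A$ is a complete section. Now build over $A$ the full Kakutani skyscraper: with $A_k=\{x\in A:\ r_A(x)=k\}$ the set of points with first return time $k$ (note $r_A\ge N$ on $A$), the sets $T^{j}A_k$, $0\le j<k$, $k\ge N$, partition $X$ mod~$0$. Slice \emph{each} column of this skyscraper into blocks of height $n$ from the bottom and let $B=\bigcup_{k}\bigcup_{i=0}^{\lfloor k/n\rfloor-1}T^{in}A_k$. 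Then $B,TB,\dots,T^{n-1}B$ are pairwise disjoint, and the uncovered remnant consists of at most $n-1$ top levels in each column, so its measure is at most $(n-1)\mu(A)\le (n-1)/N<\varepsilon$ once $N$ is chosen large. With this corrected tower lemma, the rest of your proof goes through verbatim.
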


In what follows, it is useful to drop the condition that~$T_n$ has period~$n$ almost everywhere and assume that the periods $p_n(x)$ of $T_n$ can be different at different points $x \in X$ but uniformly bounded on a subset of full measure. This weaker assertion is even slightly easier to prove than the classical Rokhlin's lemma. Denote by $\oorb(T_n)$ the measurable partition of the space $(X,\mu)$ into the orbits of~$T_n$; we may assume that almost all elements of this partition are finite sets of the form $\{x,Tx,\dots, T^kx\}$ endowed with the uniform conditional measure and a linear order such that the resulting ordered space is isomorphic to an arbitrary interval of the line
(we set $y<Ty$ for every~$y$).

Greater freedom in choosing approximations is needed to construct a coherent sequence of approximations satisfying the following properties: $\rho(T_n,T)\to 0$, and the partitions $\oorb(T_n)$ become coarser, i.e., almost every orbit of~$T_{n+1}$ consists of several orbits of~$T_n$. Besides, for every~$n$ the quotient of $\oorb(T_{n+1})$ by $\oorb(T_n)$ is endowed with a linear order. Let us introduce an abstract definition for the resulting structure.

\begin{definition}
A filtration in a Lebesgue space $(X,\mu)$ with continuous measure is a decreasing sequence of measurable partitions $\Xi = \{\xi_n\}_{n\geq 0}$ with $\xi_0=\varepsilon$ being the partition into singletons. A~filtration is said to be ergodic if the measurable intersection $\bigwedge_n\xi_n$ is the trivial partition, which is usually denoted by~$\nu$.
  \end{definition}

We introduce the following special properties of filtrations. A filtration is said to
\begin{itemize}
\item be \textbf{locally finite} if for every $n$ almost all elements of~$\xi_n$ are finite sets, and the number of different types of conditional measures on elements of~$\xi_n$ is finite (depending on~$n$);
\item be \textbf{semihomogeneous} if the conditional measures on the elements of  $\xi_n$ are uniform for all~$n$;
\item \textbf{induce an order} if every element of the quotient partition $\xi_{n+1}/\xi_n$ is endowed with a~measurable linear order (measurability means that the set of all points with a given number in the elements of $\xi_{n+1}/\xi_n$ is measurable); these orders induce a coherent order on the elements of the partitions $\xi_n$, and hence on the classes of the limiting partition $\bigcap_n \xi_n$ (which is in general not measurable); we assume that the order type is $\mathbb{Z}$ for almost all classes.
\end{itemize}

Filtrations satisfying all these properties are said to be \emph{basic}.

\begin{definition}\label{def1}
Let $T$ be an aperiodic automorphism of a Lebesgue space $(X,\mu)$. We say that a~basic filtration $\Xi = \{\xi_n\}$ of $(X,\mu)$ is \emph{basic for $T$} if the corresponding order is induced by~$T$ and the limit of  $\xi_n$ is the partition into the orbits of~$T$~mod~0.
\end{definition}

Of course, for every aperiodic automorphism~$T$ there are many basic filtrations. If $T$ is ergodic, then its basic filtrations are also ergodic. Given a basic filtration for~$T$, one can construct a coherent sequence of automorphisms~$T_n$ approximating~$T$ in the sense described above. So, the language of basic filtrations and that of coherent sequences of approximating automorhisms are equivalent.

\section{Invariants of the combinatorial equivalence of filtrations and combinatorial definiteness of automorphisms}

\subsection{Combinatorial definiteness of automorphisms}

Let $A$ be an arbitrary finite set and $\{\eta_i\}_{i=0}^{n}$ be an ordered finite filtration on this set, the last partition $\eta_n$ being trivial (consisting of a single nonempty class). We construct an ordered graded tree corresponding to this finite filtration as follows. The vertices of level~$i$ in this tree correspond to the elements of~$\eta_i$. A vertex of level~$i+1$ is joined by an edge with a vertex of level~$i$ if the corresponding elements of partitions are nested. The $n$th level contains a single vertex, while the vertices of level~$0$ are the elements of~$A$. The set~$A$ is endowed with a linear order: the order from the definition of a filtration determines an order on the edges joining every vertex with vertices of the previous level. The obtained graded tree will be called the  \textit{filtration tree on the set~$A$} (see Fig.~\ref{fig1}). The set of all ordered graded finite trees will be denoted by~$\OT$. Besides, we will consider trees with a marked vertex (leaf). The set of all ordered graded finite trees with a marked leaf will be denoted by~$\OTP$.

\begin{figure}[h!]%
\begin{center}
\includegraphics[width=0.45\textwidth]{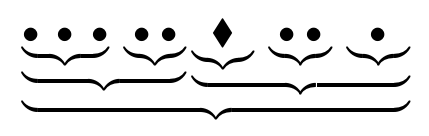}%
\includegraphics[width=0.45\textwidth]{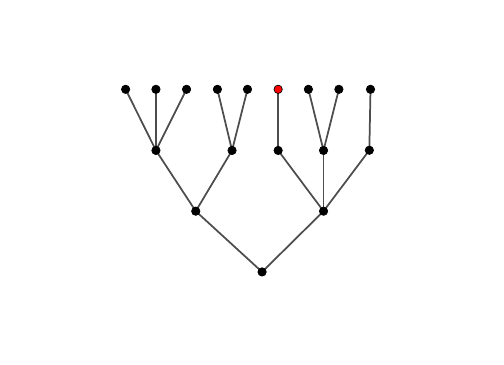}%
\end{center}
\caption{A finite filtration and its tree with a marked vertex}%
\label{fig1}%
\end{figure}

Let $\Xi = \{\xi_n\}_{n \geq 0}$ be a basic filtration on a space $(X,\mu)$. For $n \geq 0$ and $x \in X$, consider the ordered graded tree $\otp_n(x) \in \OTP$ corresponding to the restriction of the finite filtration $\{\xi_i\}_{i=0}^n$ on the element of $\xi_n$ containing $x$ with the marked leaf corresponding to~$x$. By $\ot_n(x)$ we denote the same ordered tree without marked vertex. Consider the partition $\bar\xi_n$ of the space $(X,\mu)$ into the preimages of points under the map~$\otp_n$. We say that the sequence $\bar \Xi$ of refining partitions $\{\bar\xi_n\}_{n \geq 0}$ is \textit{associated} with the basic filtration~$\Xi$.

\begin{definition}[see \cite{V70,V17}]
We say that a basic filtration $\Xi$ on a space $(X,\mu)$ is \emph{combinatorially definite} if the sequence of partitions $\bar\xi_n$, $n \geq 0$, is a basis in the space $(X,\mu)$, that is, it converges to the partition into singletons~mod~0 (in more detail, this means that there exists a subset $\tilde X \subset X$ of full measure such that for any two points $x,y \in \tilde X$ there exists $n$ such that $x$ and $y$ lie in different elements of the partition~$\bar\xi_n)$.
\end{definition}

Essentially, this definition singles out a class of basic filtrations that are completely determined, up to metric isomorphism, by the collection of invariants of their finite fragments. These invariants are of combinatorial nature, which explains the name.  The definition is inspired by another one (see~\cite{V17}), which singles out a class of arbitrary (not necessarily basic) filtrations called {\it standard}. The latter definition is, in turn, a generalization of earlier work on standard dyadic and homogeneous filtrations~\cite{V70,V94}. The important question of the relation between the notions of combinatorial definiteness and standardness will be considered later.

\subsection{Combinatorial equivalence of filtrations and the canonical quotient}

\begin{definition}[see \cite{V17}]
We say that two basic filtrations $\Xi^1 = \{\xi_n^1\}_{n\geq 0}$ and $\Xi^2= \{\tilde\xi_n^2\}_{n\geq 0}$ on Lebesgue spaces $(X^1,\mu^1)$ and  $(X^2,\mu^2)$, respectively, are \emph{combinatorially equivalent}, or \emph{have the same combinatorial type}, if for every $n \geq 0$ the finite filtrations $\{\xi_k^1\}_{k=0}^n$ of  $(X^1,\mu^1)$ and $\{\xi_k^2\}_{k=0}^n$ of  $(X^2,\mu^2)$ are metrically isomorphic.
\end{definition}

It is clear from the previous definition that for combinatorially definite basic filtrations, combinatorial equivalence coincides with metric isomorphism.

Let $\Xi$ be a basic filtration on a space $(X,\mu)$. Consider the equivalence relation on~$X$ determined by the associated sequence of measurable partitions $\bar \Xi$: points $x$ and $y$ lie in the same equivalence class if and only if $\otp_n(x)=\otp_n(y)$ for all $n \geq 0$. This equivalence relation is measurable and respects the order, hence we can take the  corresponding quotient. The resulting filtration will be called the
\textit{canonical quotient} of the basic filtration~$\Xi$.

\begin{remark}
The canonical quotient of a combinatorially definite basic filtration~$\Xi$ coincides with~$\Xi$ itself. The canonical quotient of an arbitrary basic filtration~$\Xi$ is combinatorially equivalent to~$\Xi$, but, in general, not metrically isomorphic to~$\Xi$.
\end{remark}

\begin{remark}
If $\Xi$ is a basic filtration for an automorphism~$T$, then the canonical quotient of~$\Xi$ determines a quotient of~$T$. However, unlike the quotient filtration, this quotient automorphism is not canonical, since it depends on the choice of an approximation.
\end{remark}

The canonical projection turns the space of all basic filtrations into a bundle over the space of all combinatorially definite basic filtrations. A very important and interesting question is whether the fibers of this bundle are isomorphic in some sense, i.e., whether the bundle is isomorphic to a~direct product.

\subsection{Random hierarchies on $\mathbb{Z}$ and classification of combinatorially definite filtrations}

\begin{definition}
A \emph{hierarchy on $\mathbb{Z}$} is a filtration on the space $\mathbb{Z}$ endowed with the counting measure that is basic for the left shift in the sense of Definition~\ref{def1}. By $\iz$ we denote the set of all hierarchies on $\mathbb{Z}$; this is a compact space in the natural topology.
\end{definition}

In other words, a \textit{hierarchy on $\mathbb{Z}$} is a coarsening sequence of partitions of $\mathbb{Z}$ such that every element of every partition is a finite interval of consecutive integers and any two integers lie in the same element of some partition with a sufficiently large number.

Let $\Xi$ be a basic filtration on a space $(X,\mu)$. With almost every point $x\in X$ we associate the hierarchy $\ii(x)$ on $\mathbb Z$ determined by the restriction of~$\Xi$ to the orbit of~$x$ identified with $\mathbb Z$ in a natural way (the point $T^n x$, $n \in \mathbb{Z},$ is identified with~$n$). Note that the hierarchy $\ii(x)$ can be understood as the inductive limit as $n \to \infty$ of the ordered graded trees $\otp_n(x)$ with marked vertices.

The image of the measure $\mu$ under the map $\ii$ is a shift-invariant measure on the space $\iz$. Thus, a basic filtration $\Xi$ determines an \emph{invariant random hierarchy} on $\mathbb{Z}$. This measure will be called the \emph{combinatorial scheme} of~$\Xi$. A basic filtration $\Xi$ is combinatorially definite if and only if the map~$\ii$ is injective mod~0. Given an invariant measure~$\nu$ on the space $\iz$, there exists a space $(X,\mu)$ and a basic combinatorially definite filtration~$\Xi$ on this space such that $\nu$ is the combinatorial scheme of~$\Xi$.

\begin{theorem}
In the class of combinatorially definite filtrations, the combinatorial scheme is a~complete metric invariant.
\end{theorem}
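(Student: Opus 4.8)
The plan is to establish the two implications that together constitute ``complete invariant'': \emph{invariance} (metrically isomorphic combinatorially definite filtrations have equal combinatorial schemes) and \emph{completeness} (equal combinatorial schemes force metric isomorphism). Invariance is the routine direction. If $\phi\colon(X^1,\mu^1)\to(X^2,\mu^2)$ is an isomorphism of the ordered basic filtrations, then $\phi$ carries $\xi^1_n$ to $\xi^2_n$ for every $n$ and respects the coherent $\mathbb Z$-order, hence it maps orbits to orbits preserving the identification with $\mathbb Z$. Consequently the ordered marked trees agree, $\otp_n\big(\phi(x)\big)=\otp_n(x)$ for a.e.\ $x$ and all $n$, so $\ii^2\big(\phi(x)\big)=\ii^1(x)$ a.e.; pushing $\mu^1$ forward and using $\phi_*\mu^1=\mu^2$ yields $\ii^2_*\mu^2=\ii^1_*\mu^1$, i.e.\ equal combinatorial schemes.

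For completeness, suppose $\Xi^1,\Xi^2$ are combinatorially definite with common scheme $\nu=\ii^1_*\mu^1=\ii^2_*\mu^2$. By the criterion quoted above, combinatorial definiteness means exactly that each $\ii^j$ is injective mod~$0$. First I would upgrade this to the statement that $\ii^j$ is a mod-$0$ isomorphism of $(X^j,\mu^j)$ onto a full-$\nu$-measure subset of $(\iz,\nu)$: an injective Borel map between standard Borel spaces has Borel image and Borel inverse (Lusin--Souslin), and the pushforward condition $\ii^j_*\mu^j=\nu$ forces $\nu$ to be concentrated on that image. Then $\phi:=(\ii^2)^{-1}\circ\ii^1$ is a well-defined isomorphism of measure spaces $X^1\to X^2$, and the task reduces to checking that $\phi$ respects the ordered filtration structure.

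Here the key observation is that the entire ordered filtration is encoded, orbitwise, in the hierarchy. By construction $\ii^j$ is equivariant: it intertwines the automorphism $T_j$ canonically determined by the ordered basic filtration $\Xi^j$ with the shift $\sigma$ on $\iz$, so from $\ii^2\circ\phi=\ii^1$ one gets $\phi\circ T_1=T_2\circ\phi$ a.e.; thus $\phi$ sends $T_1$-orbits to $T_2$-orbits preserving the $\mathbb Z$-coordinate. Moreover, for a.e.\ $x$ the element of $\xi^j_n$ containing $x$, read inside its orbit $\cong\mathbb Z$, is precisely the interval about $0$ prescribed by the $n$-th partition of the hierarchy $\ii^j(x)$. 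Since $\ii^2\big(\phi(x)\big)=\ii^1(x)$ and $\phi$ preserves the (order-preserving) $\mathbb Z$-coordinate, $\phi$ maps the $\xi^1_n$-class of $x$ onto the $\xi^2_n$-class of $\phi(x)$ for every $n$, and does so in an order-preserving way. Hence $\phi$ is an isomorphism of ordered filtrations, which is exactly what metric isomorphism means in this category.

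I expect the main obstacle to be the measure-theoretic bookkeeping around the mod-$0$ inversion in the middle step: one must verify that ``injective mod~$0$'' together with the pushforward identity really does produce a Borel isomorphism onto a conull set, and that the equivariance of $\ii^j$ and the interval-recovery of $\xi^j_n$ from the hierarchy both hold after discarding a single, suitably invariant, null set, so that $\phi=(\ii^2)^{-1}\circ\ii^1$ and all the identities above make sense simultaneously almost everywhere. Once this is arranged, the recovery of the filtration and the order from the hierarchy is essentially tautological, and the theorem follows.
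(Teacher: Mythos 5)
Your proposal is correct and takes essentially the same approach the paper relies on: the paper states this theorem without a separate proof because it follows from the two preceding observations---that the scheme is the pushforward of $\mu$ under $\ii$ and that combinatorial definiteness is precisely injectivity of $\ii$ mod~0---so that $\ii$ identifies each filtration with the canonical one living on $(\iz,\nu)$. Your completeness argument via $\phi=(\ii^2)^{-1}\circ\ii^1$, with the equivariance and the interval-recovery of $\xi_n$ from the hierarchy, is exactly this identification written out in detail.
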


We illustrate the introduced notion with a simplest example. A measure on the space~$\iz$ of hierarchies is determined by its values on the cylinders, i.e., sets of hierarchies having a fixed structure on a given element of the partition of level~$n$ containing~$0$. In the case where $\Xi$ is a~dyadic filtration on a Lebesgue space $(X,\mu)$, the hierarchies $\ii(x)$ corresponding to points~$x$ of~$X$ are also dyadic. The only shift-invariant measure concentrated on dyadic hierarchies is uniform: all cylinders determined by elements of the partition of level~$n$ have equal probabilities.

\begin{definition}
We say that an automorphism~$T$ of a space $(X,\mu)$ is \emph{combinatorially definite} with respect to a combinatorial scheme if there is a combinatorially definite basic filtration of~$T$ with this combinatorial scheme.
\end{definition}

\begin{theorem}
Every automorphism is combinatorially definite with respect to some combinatorial scheme.
 \end{theorem}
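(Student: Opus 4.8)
The plan is to reduce the statement to the construction of a single combinatorially definite basic filtration for the given automorphism~$T$ (which we take to be aperiodic, as this is what the notion of a basic filtration requires). Indeed, once we exhibit such a filtration~$\Xi$, its combinatorial scheme $\ii_*\mu$ is, by definition, a scheme with respect to which $T$ is combinatorially definite. The existence of \emph{some} basic filtration for~$T$ is already guaranteed by the coherent-approximation construction following Rokhlin's lemma, so the whole difficulty is to arrange combinatorial definiteness, i.e.\ (by the criterion recorded above) to make the associated tree map $\ii\colon X\to\iz$ injective mod~$0$, equivalently to force the refining partitions $\bar\xi_n$ to converge to the partition into singletons. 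Note that the canonical quotient does \emph{not} solve the problem, since by the remarks above it produces a combinatorially definite filtration only for a \emph{quotient} of~$T$.

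First I would fix a refining sequence of finite partitions $P^{(1)}\prec P^{(2)}\prec\cdots$ of $(X,\mu)$ generating the full $\sigma$-algebra mod~$0$ (such a sequence exists because $(X,\mu)$ is a Lebesgue space), so that a point is determined mod~$0$ by the collection of its $P^{(m)}$-labels. The aim is to build a basic filtration for~$T$ whose ordered graded trees $\otp_n(x)$ encode these labels, so that knowing $\ii(x)$ recovers all $P^{(m)}$-labels of~$x$ and hence~$x$ itself.

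Then I would realize $T$ by a coherent sequence of periodic approximations $T_n$ via Kakutani--Rokhlin towers, telescoping them so that the orbit partitions $\oorb(T_n)$ form a basic filtration~$\Xi$ for~$T$: the orbits coarsen, the periods grow, the order is induced by~$T$ and has type~$\mathbb{Z}$ in the limit, the limiting partition of~$\Xi$ is the partition into $T$-orbits mod~$0$, and semihomogeneity is automatic because each finite $T_n$-orbit carries the uniform conditional measure. The essential move is to choose, at an increasing sequence of levels, the shape of the telescoping --- the number and arrangement of level-$n$ blocks glued into a level-$(n+1)$ block, together with the order placed on the quotient $\xi_{n+1}/\xi_n$ --- so that the resulting ordered-tree type records the $P^{(m)}$-name read along the orbit. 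Because finite ordered graded trees form a rich family while each finite partition $P^{(m)}$ has only finitely many values, there is room to fix an injective encoding of $P^{(m)}$-names into tree shapes at the $m$-th batch of levels. With such an encoding, the marked leaf of $\otp_n(x)$ pins down the position of~$x$ along its orbit and the tree shape pins down the names; letting $n\to\infty$, the hierarchy $\ii(x)$ determines the entire $P^{(m)}$-labeling of the orbit of~$x$, and separation of points of~$X$ then follows from the generating property of $\{P^{(m)}\}$.

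The step I expect to be the main obstacle is precisely this combinatorial encoding under the constraint of semihomogeneity: the ordered tree forgets all measure-theoretic data except order and branching, and the uniformity of conditional measures forbids encoding information through unequal weights, so the labels must be carried entirely by the branching pattern and the orders on successive quotients while keeping the filtration genuinely basic for~$T$ --- in particular, keeping $\rho(T_n,T)\to 0$ so that the limiting partition is the orbit partition of~$T$. Verifying that one can simultaneously (i)~encode a generating sequence, (ii)~preserve semihomogeneity and order type~$\mathbb{Z}$, and (iii)~keep the approximations convergent to~$T$ is the technical heart; the remaining check that $\ii$ is then injective mod~$0$, so that $\ii_*\mu$ is the desired scheme, is routine given the generating property.
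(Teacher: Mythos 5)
Your reduction is exactly the paper's: the theorem follows once one produces, for the given (aperiodic) automorphism $T$, a single combinatorially definite basic filtration, whose combinatorial scheme is then the desired witness (this is precisely Corollary~\ref{cor1} in the paper), and you correctly observe that the canonical quotient cannot serve this purpose. However, your proposal stops exactly where the proof begins: the ``combinatorial encoding'' of a generating sequence of partitions into pure branching-and-order data is not carried out but only flagged as ``the technical heart,'' with no mechanism offered for overcoming it. This is a genuine gap, and it is not a routine one. The difficulty is not merely that semihomogeneity forbids encoding by unequal weights; it is that a hierarchy $\ii(x)$ records \emph{only} the nested interval structure of the orbit (block boundaries), so the $P^{(m)}$-names must be recoverable from block-length and grouping patterns alone, and these patterns must be chosen \emph{measurably and consistently along orbits}: the new blocks must tile each orbit, their composition out of old blocks must be realized by an actual Kakutani--Rokhlin tower whose height/grouping function is constant on name-classes yet injective on shapes, all while keeping $\rho(T_n,T)\to 0$. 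Nothing in your write-up shows this simultaneous requirement can be met, and asserting that ``there is room'' because trees form a rich family is not an argument.

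The paper resolves precisely this point by a modular detour that separates the measure theory from the combinatorics. First, names are attached to the filtration for free as \emph{colors} (Proposition~\ref{prop2}: color $\xi_n$ by a separating sequence of finite partitions); a colored basic filtration is then automatically combinatorially definite in the colored sense and is realized as the canonical colored filtration of a graded graph with an essential central measure (Proposition~\ref{rem2}, i.e., Theorem~\ref{thAR}). The conversion of colors into genuine tree-shape information is then a purely graph-theoretic step with no measure theory in it: Proposition~\ref{prop3} shows one can insert intermediate levels into any such graph (every vertex receiving in-degree one or two, no two vertices of a level having the same ordered ancestors) so that a telescoping of an induced subgraph of the \emph{minimal} uniadic graph $\uag$ recovers the original graph; minimality of $\uag$ plus Proposition~\ref{prop1} then yields combinatorial definiteness of the tail filtration, proving Corollary~\ref{cor1}. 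Note the structural point your plan misses: the combinatorially definite filtration is \emph{not} built by choosing the tower shapes at the original levels, but by \emph{refining} the filtration with inserted intermediate partitions --- the original filtration is a lacunary subsequence of the definite one (this is why the paper calls Theorem~2 an analog of the lacunary theorem). If you want to complete your direct approach, you would in effect have to reprove this insertion/thinning mechanism by hand inside the tower construction, which is exactly the content you have deferred.
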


The proof of this theorem essentially follows from an analog of the lacunary theorem, see Section~4, Corollary~\ref{cor1}.

\emph{The collection of combinatorial schemes with respect to which an automorphism~$T$ is combinatorially definite will be called the combinatorial scheme of~$T$. Some characteristics of this scheme are metric invariants of the automorphism.}

It is of interest to study the behavior of the combinatorial scheme of an automorphism with respect to various operations (taking a derivative or integral automorphism, the product of automorphisms, etc.).

One can easily see a similarity between this definition and that of the scale of an automorphism (see~\cite{V73f}), which is exactly one of the invariant characteristics of the combinatorial scheme and the automorphism itself. In more detail this relation will be discussed elsewhere.

The most interesting class consists of automorphisms with the simplest possible combinatorial scheme, the dyadic one; it is this scheme that is related to the notion of a measure-preserving automorphism with complete scale
(see~\cite{V73f,V17}). In a later paper \cite{Kat}, the notion of a standard automorphism was defined (the term is chosen by analogy with the notion of a standard dyadic filtration introduced in~\cite{V70}); the definition involves the notion of monotone equivalence in the sense of Kakutani. Apparently, the standardness of an automorphism in the sense of~\cite{Kat} is close to our combinatorial definiteness of an automorhism with respect to the dyadic scale.

The concept of the combinatorial scheme of an automorphism also covers substitutional ergodic theorems
related to the scale, see~\cite{V73f}.

\section{Realization of colored filtrations as tail filtrations on path spaces of graphs}

As already mentioned, adic shifts on path spaces of graded graphs are important examples of automorphisms of measure spaces. On the other hand, in \cite{V81,V82} the first author proved that every ergodic automorphism has an adic realization.

\begin{theorem}[\cite{V82}]\label{thAR}
For every ergodic automorphism~$T$ of a Lebesgue space there is a graded graph~$\Gamma$ endowed with an adic structure and a central measure $\mu$ on the path space~$\T(\Gamma)$ of~$\Gamma$ such that the adic shift $(\T(\Gamma),\mu)$ is isomorphic to~$T$.
\end{theorem}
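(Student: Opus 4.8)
The plan is to pass from the automorphism $T$ to a combinatorially definite basic filtration of $T$, to read off a graded graph with an adic structure from the combinatorial data of this filtration, and to identify the original system with the adic shift on the resulting path space equipped with the push-forward of $\mu$. Concretely, I would first fix a basic filtration $\Xi=\{\xi_n\}$ for $T$. Such filtrations exist: starting from a coherent sequence of periodic approximations $T_n\to T$ furnished by Rokhlin's lemma, one obtains a decreasing sequence of orbit partitions $\xi_n=\oorb(T_n)$ that is locally finite, semihomogeneous (uniform conditional measures, by invariance of $\mu$), order-inducing (with $y<Ty$), and satisfies $\bigwedge_n\xi_n=\oorb(T)$; ergodicity of $T$ makes $\Xi$ ergodic. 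Moreover, by Theorem~2 the approximations may be chosen so that $\Xi$ is combinatorially definite, i.e.\ the associated sequence $\bar\Xi=\{\bar\xi_n\}$ separates points and the map $\ii\colon X\to\iz$ is injective mod~0. This combinatorial definiteness is exactly what will later upgrade combinatorial equivalence to genuine metric isomorphism.

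Next I would build the graph $\Gamma$ directly from the combinatorial types of $\Xi$. The vertices of level~$n$ are the ordered graded trees $\ot_n(x)\in\OT$ that occur with positive measure; I join a level-$n$ vertex $\ot_n(x)$ to a level-$(n+1)$ vertex $\ot_{n+1}(x)$ by an edge recording how the element of $\xi_n$ containing $x$ sits inside the element of $\xi_{n+1}$ containing $x$, and I order the edges entering each level-$(n+1)$ vertex by the linear order that the basic filtration places on the quotient $\xi_{n+1}/\xi_n$. This produces a graded graph endowed with an adic structure; equivalently, $\Gamma$ is the part of the universal adic graph carrying the combinatorial scheme of $\Xi$. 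A point $x$ then determines an infinite path whose level-$n$ vertex is $\ot_n(x)$ and whose finite initial segment below level~$n$ encodes the position of $x$ inside the element of $\xi_n$, so that the whole path is the hierarchy $\ii(x)$ with its marked leaf $0$ (the class of $x$ under the identification of the orbit with $\mathbb Z$).

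I would then transport the structure along the map $x\mapsto\ii(x)$, regarded as a map into $\T(\Gamma)$, and push $\mu$ forward to the measure $\ii_*\mu=:\mu$ on $\T(\Gamma)$. Injectivity of $\ii$ mod~0 (combinatorial definiteness) makes this a metric isomorphism of $(X,\mu)$ onto $(\T(\Gamma),\mu)$; semihomogeneity of $\Xi$ says that the conditional measure on the finite paths below any fixed level-$n$ vertex is uniform, which is precisely the centrality of $\mu$; and since within each element of $\xi_n$ the order is the $T$-order $y<Ty$, the successor (Vershik) map of the adic structure corresponds under $\ii$ to $x\mapsto Tx$ mod~0. Hence the adic shift on $(\T(\Gamma),\mu)$ is isomorphic to $T$.

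The main obstacle is the gap between combinatorial and metric data: the graph, being read off from the types $\ot_n$, a priori determines $\Xi$ only up to combinatorial equivalence, and for a general basic filtration $\ii$ collapses points, so the naive path space would be too small to carry $T$. Combinatorial definiteness, secured by Theorem~2, closes this gap, since then $\ii$ is injective and, by Theorem~1, the combinatorial scheme recovers $\Xi$ up to metric isomorphism. A secondary point requiring care is that the Vershik successor map be defined mod~0: this is where the requirement that the induced order have type $\mathbb Z$ on almost every class is used, ruling out minimal or maximal paths of positive measure and making the adic shift a genuine automorphism that matches $T$.
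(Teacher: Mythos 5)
Your proposal has a genuine circularity with respect to the logical structure of the paper. The crucial step in your argument is the claim that ``by Theorem~2 the approximations may be chosen so that $\Xi$ is combinatorially definite.'' But in the paper, Theorem~2 (every automorphism is combinatorially definite with respect to some combinatorial scheme) is \emph{not} available before Theorem~\ref{thAR}: its proof goes through Corollary~\ref{cor1}, which is deduced from the universality theorem (Theorem~\ref{th1}), which in turn is obtained by combining Theorem~\ref{thAR} with Proposition~\ref{prop3}. So the existence of a combinatorially definite basic filtration for an arbitrary ergodic $T$ is a \emph{consequence} of the adic realization theorem, not an ingredient one may assume when proving it. Nor can this step be repaired cheaply: a basic filtration produced by Rokhlin-type approximations has no reason to be combinatorially definite. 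For instance, if the approximations are dyadic, combinatorial definiteness of $\Xi$ is precisely standardness of the dyadic filtration, which fails for a continuum of pairwise nonisomorphic dyadic filtrations (\cite{V71}); in that case the map $\ii$ collapses points, and your construction realizes only the canonical quotient of $\Xi$ (hence a proper quotient of $T$), not $T$ itself.

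The paper closes exactly this gap with the colored-filtration machinery, which your proposal does not use. One starts with an arbitrary basic filtration $\Xi$ for $T$ (no definiteness assumed), and Proposition~\ref{prop2} shows --- by the elementary device of fixing a separating sequence of finite partitions $\{\zeta_n\}$ and coloring the elements of $\xi_n$ accordingly --- that the \emph{colored} filtration is always combinatorially definite. Proposition~\ref{rem2} then realizes a combinatorially definite colored filtration as the canonical colored filtration of a graded graph with an essential central measure: the vertices of the graph are colored trees, so the graph is larger (in general not minimal), and injectivity of the realization map comes from the colors rather than from any intrinsic property of $\Xi$. Your uncolored construction (essentially that of Proposition~\ref{prop1}) works only for filtrations that are already combinatorially definite, and Proposition~\ref{prop1} shows this is an if-and-only-if; so without the coloring step your argument cannot reach a general ergodic $T$. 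The remainder of your proposal --- centrality of the pushed-forward measure from semihomogeneity, and the matching of the adic successor map with $T$ via the order $y<Ty$, using the order type $\mathbb{Z}$ to rule out maximal and minimal paths of positive measure --- is correct and agrees with the paper; the missing idea is the coloring.
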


The language of graded graphs is closely related to the language of basic filtrations. Let $\Gamma$ be a~graded graph endowed with an adic structure; the space  $\T(\Gamma)$ of infinite paths in~$\Gamma$ is equipped with the tail filtration $\Xi=\{\xi_n\}$ determined by the structure of the graph: two paths lie in the same element of the partition $\xi_n$ if they coincide starting from the $n$th level. Let $\mu$ be a central measure on $\T(\Gamma)$ such that almost every path has a successor and a predecessor in the sense of the adic order; in what follows, such a measure is said to be an \emph{essential central} measure. Then $\Xi$ is a~basic filtration on the space $(\T(\Gamma),\mu)$, the corresponding order being determined by the adic structure.

Let $\Gamma$ be a graded graph endowed with an adic structure. With each vertex~$v$ of~$\Gamma$ we associate an ordered graded tree~$\ot(v) \in \OT$ according to the following rule. With the vertex of level~$0$ we associate the tree consisting of a single vertex. Then we apply the following recursive (on  $n$, where $n \geq 0$) procedure. Let $w$ be a vertex of level~$n+1$ in~$\Gamma$, and let $(v_1,w), \dots, (v_k,w)$ be all edges leading to~$w$ from vertices of level~$n$ in the adic order (some vertices $v_i$ may be repeated). The graded tree~$\ot(w)$ is defined as follows: its root corresponds to the vertex~$w$ itself; there are $k$ edges joining it with the vertices corresponding to $v_1,\dots,v_k$ (counting multiplicities), the  ordered graded tree $\ot(v_i)$, $i=1,\dots, k$, already defined at the previous step hanging from each of these vertices as a root. An order on the edges leading from the root is determined by the adic structure of the graph~$\Gamma$.

The leaves of the constructed tree~$\ot(w)$ are in a one-to-one correspondence with the paths in~$\Gamma$ leading to~$w$ from the vertex of level~$0$, and the order on them corresponds to the adic order. The graded tree~$\ot(w)$ in which the leaf corresponding to a path~$x$  to~$w$ is marked will be denoted by~$\ot(x)$.

Thus we have defined a map~$\ot$ from the set of vertices of the graph~$\Gamma$ to the set $\OT$ of ordered graded trees; we have also defined a map, denoted by the same symbol, that sends finite paths starting at the vertex of level~$0$ in~$\Gamma$  to ordered graded trees with marked vertices.

\begin{definition}\label{mingraph}
Let $\Gamma$ be a graded graph endowed with an adic structure. We say that $\Gamma$ is \emph{minimal} if for any two vertices~$v,w$ of the same level, the trees~$\ot(v)$ and~$\ot(w)$ are different.

Let $\mu$ be an essential central measure on the space~$\T(\Gamma)$ of infinite paths in~$\Gamma$. We say that the measure $\mu$ is \emph{minimal} if almost all infinite paths
differ in the graded trees corresponding to their initial segments (i.e., there exists a subset  of full measure in~$\T(\Gamma)$ such that for any two paths $x$ and $y$  from this subset there is $n\geq 1$ such that $\ot(x[n])\ne \ot(y[n])$, where $x[n]$ and $y[n]$ are the initial segments  of length~$n$ of $x$ and $y$, respectively).
\end{definition}

\begin{proposition}\label{prop1}
A basic filtration $\Xi$ on a space $(X,\mu)$ is combinatorially definite if and only if it is isomorphic to the tail filtration of a minimal graded graph endowed with an adic structure and an essential central measure on the path space.
\end{proposition}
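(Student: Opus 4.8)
The plan is to prove both directions of the equivalence by constructing the appropriate objects explicitly, using the combinatorial data $\ot_n(x)$ and $\otp_n(x)$ that was already introduced for basic filtrations. The key observation linking the two languages is that the ordered graded trees $\ot(w)$ associated with vertices of an adic graph play exactly the same role as the trees $\ot_n(x)$ associated with points of a filtered space; in both cases they encode the finite ordered filtration structure below a given object.

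For the direction ``tail filtration of a minimal graph $\Rightarrow$ combinatorially definite,'' I would start from a minimal graded graph $\Gamma$ with adic structure and an essential central minimal measure $\mu$. By the remarks preceding the proposition, the tail filtration $\Xi$ on $(\T(\Gamma),\mu)$ is indeed basic. The task is then to identify the associated partitions $\bar\xi_n$ with the partition of $\T(\Gamma)$ according to the value $\ot(x[n])$. This follows because for a path $x$ passing through vertex $w$ at level $n$, the tree $\otp_n(x)$ computed inside the $\xi_n$-element containing $x$ coincides with the marked tree $\ot(x[n])$ built from the graph: both are obtained by the same recursive procedure of hanging the lower-level trees on vertices according to the adic order. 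Given this identification, the minimality of the measure, which says precisely that almost all paths are separated by some $\ot(x[n])$, becomes exactly the statement that the $\bar\xi_n$ separate points mod $0$, i.e.\ that $\Xi$ is combinatorially definite.

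For the converse, ``combinatorially definite $\Rightarrow$ realizable as such a tail filtration,'' I would build the graph $\Gamma$ out of the combinatorial invariants themselves. Define the vertices of level~$n$ to be the ordered graded trees in $\OT$ that occur with positive measure as values of $\ot_n$; draw an edge from a level-$n$ vertex $v$ to a level-$(n+1)$ vertex $w$ whenever $v$ appears as one of the subtrees hanging from the root of $w$, with edge multiplicities and the edge order dictated by the internal structure of $w$. This is a graded graph with a natural adic structure. The map $x \mapsto (\ot_n(x))_{n\ge0}$ sends a point to an infinite path in $\Gamma$, and pushing forward $\mu$ gives a central measure; semihomogeneity of $\Xi$ guarantees centrality, and the order-type-$\mathbb Z$ condition guarantees that almost every path has a successor and predecessor, so the measure is essential central. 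The graph is minimal by construction, since distinct level-$n$ vertices are by definition distinct trees. Combinatorial definiteness of $\Xi$, i.e.\ injectivity of $\bar\Xi$ mod $0$, translates directly into minimality of the measure and into the map $x \mapsto (\ot_n(x))_n$ being an isomorphism of $\Xi$ with the tail filtration of $(\T(\Gamma),\mu)$.

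The main obstacle I expect is not the abstract correspondence but the bookkeeping needed to show that this push-forward is genuinely a measure-theoretic isomorphism rather than merely a combinatorial equivalence: one must verify that the constructed map $x\mapsto(\ot_n(x))_n$ is injective mod $0$ (this is exactly combinatorial definiteness) and that it preserves the filtration structure, the order, and the measure simultaneously. The delicate point is matching the adic order on edges of $\Gamma$ with the linear orders induced by $\Xi$ on the quotients $\xi_{n+1}/\xi_n$ in a way that is coherent across all levels and consistent with the identification of marked trees $\otp_n(x)$ with marked paths $\ot(x[n])$; checking that the local orderings glue into a global order of type $\mathbb Z$ on almost every orbit, matching the one carried by $\Xi$, is where the argument requires care.
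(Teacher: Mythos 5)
Your overall route coincides with the paper's own proof: in one direction you build the graph whose level-$n$ vertices are the occurring trees $\ot_n(x)$, push the measure forward, and read off centrality, essentiality and injectivity from semihomogeneity, the order condition and combinatorial definiteness; in the other you identify $\otp_n(x)$ with $\ot(x[n])$. That is exactly the paper's construction.

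There is, however, one genuine slip in your direction ``tail filtration of a minimal graph $\Rightarrow$ combinatorially definite'': you assume the \emph{measure} is minimal, but that is not a hypothesis of the proposition. Definition~\ref{mingraph} contains two distinct notions, minimality of the graph (distinct vertices of a level have distinct trees $\ot(v)$) and minimality of the measure (a.e.\ separation of paths by the marked trees), and the proposition grants only the former together with an essential central measure. What your argument actually establishes is the Remark following the proposition (essential \emph{minimal} central measure on an \emph{arbitrary} graph implies combinatorial definiteness), not the stated equivalence. The missing step is to derive path separation from graph minimality: if two distinct paths first differ at level $n$, then either they pass through different vertices at that level, whose trees $\ot(v)$ differ because the graph is minimal, or they reach the same vertex by different initial segments, in which case $\ot(x[n])$ and $\ot(y[n])$ carry different marked leaves. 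This two-case argument (which in fact separates \emph{all} pairs of distinct paths, so any essential central measure on a minimal graph is automatically minimal) is what the paper's proof uses, and with it your identification step closes the direction. A smaller gloss of the same kind occurs in your converse: ``minimal by construction, since distinct level-$n$ vertices are by definition distinct trees'' silently requires checking that the tree $\ot(v)$ computed recursively from the constructed graph coincides with the tree labelling $v$; the paper states this verification explicitly, and it is the same identification you rely on in the other direction.
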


\begin{proof}
Let $\Xi$ be a combinatorially definite basic filtration. We construct a graded graph in which the vertices of level~$n$, $n \geq 0$, correspond to the different types of ordered trees $\ot_n(x)$, $x \in X$. Two vertices of neighboring levels are joined by an edge if they correspond to nested ordered trees; an order on the edges entering each vertex is determined in a natural way by the order of the corresponding tree. With each point $x\in X$ we associate the infinite path in the constructed graph that passes through the vertices corresponding to the trees $\ot_n(x)$, $n \geq 0$, and edges corresponding to the embeddings of $\otp_n(x)$ into $\otp_{n+1}(x)$. Since $\Xi$ is combinatorially definite, the resulting map is injective mod~0, and it sends the order of~$\Xi$ to the adic order on the graph. It follows that the pullback of~$\mu$ under this embedding is a central measure. It is an essential central measure, since the filtration is basic. Obviously, the tail filtration of the constructed graph endowed with this measure and the adic order is isomorphic to the original basic filtration~$\Xi$. The minimality of the graph follows from the construction: one can easily check that for every vertex~$v$ the ordered graded tree~$\ot(v)$ is exactly the tree with which $v$ is associated in the construction.

Conversely, assume that we have a minimal graded graph endowed with an adic structure and $\Xi$ is its tail filtration. If $x$ is a path in this graph, then $\ot_n(x) = \ot(x[n])$, where $x[n]$ is the initial segment of $x$ of length $n$. Since the graph is minimal, every path is uniquely determined by the collection of graded trees $\otp_n(x)$, $n \geq 0$, which means exactly that $\Xi$ is combinatorially definite.
\end{proof}

\begin{remark}
Let $\Gamma$ be a graded graph and $\Xi$ be the tail filtration of~$\Gamma$. If $\mu$ is  an essential minimal central measure on the path space $\T(\Gamma)$, then $\Xi$ is a combinatorially definite basic filtration of the space~$(\T(\Gamma),\mu)$.
\end{remark}

Proposition~\ref{prop1} shows that only a combinatorially definite basic filtration can have an adic realization on a minimal graph. In the case of a basic filtration that is not combinatorially definite, the construction described in the proof of Proposition~\ref{prop1} determines a quotient of this filtration isomorphic to the tail filtration of a minimal graph; essentially, this is exactly the canonical quotient. To construct an adic realization of a basic filtration that is not combinatorially definite, it is convenient to use the language of colored filtrations.

\begin{definition}
Let $\xi$ be a measurable partition of a space $(X,\mu)$. We say that $\xi$ is a \emph{colored partition} if the quotient space $X/\xi$ is endowed with a finite measurable partition $c[\xi]$, called a \emph{coloring}, which assigns  \emph{colors} to the elements of $\xi$.

A basic filtration $\Xi = \{\xi_n\}_{n \geq 0}$ is called a \emph{colored filtration} if each partition $\xi_n$ is endowed with a coloring $c[\xi_n]$.
\end{definition}

Let $\Xi$ be a colored basic filtration of a space $(X,\mu)$. For every $n \geq 0$ and almost every point $x \in X$, the ordered trees~$\otp_n(x)$ considered above also become colored: the color of a vertex of level~$i$ is defined as the color of the corresponding element of the partition~$\xi_i$. Let $\cotp_n(x)$ be the colored tree $\otp_n(x)$ with a marked vertex. The measurable partition of the space $(X,\mu)$ into the preimages of points under the map~$\cotp_n$ will be denoted by~$\overline{c(\xi_n)}$.

\begin{definition}
We say that a colored basic filtration~$\Xi$ of a space $(X,\mu)$ is \emph{combinatorially definite} if the sequence of partitions $\overline{c(\xi_n)}$, $n \geq 0$, is a basis of~$(X,\mu)$.
\end{definition}

Let $\Gamma$ be a graded graph endowed with an adic structure and  $\Xi$ be the tail filtration on the path space $\T(\Gamma)$. Let $\mu$ be an essential central measure on $\T(\Gamma)$, not necessarily minimal. Then $\Xi$ is a basic filtration on the space $(\T(\Gamma),\mu)$. A natural coloring of the elements of the partition $\xi_n$, $n \geq 0$, is determined by the vertices of level~$n$ in~$\Gamma$: assign a color to each such vertex, and define the color of an element of~$\xi_n$ as the color of the vertex of level~$n$ lying on the paths from this element. The colored basic filtration thus defined will be called the \textit{canonical colored filtration of the graph~$\Gamma$}.

\begin{proposition}\label{rem2}
Let $\Gamma$ be a graded graph endowed with an adic structure and an essential central measure. Then its canonical colored filtration is combinatorially definite. Conversely, if a Lebesgue space $(X,\mu)$ is endowed with a combinatorially definite colored basic filtration~$\Xi$, then there exists a graded graph~$\Gamma$ and an essential central measure~$\tilde \mu$ on its path space $\T(\Gamma)$ such that the canonical colored filtration of~$\Gamma$ is isomorphic to~$\Xi$.
\end{proposition}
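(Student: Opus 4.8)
The plan is to prove this as the colored analogue of Proposition~\ref{prop1}, replacing the ordered trees $\otp_n(x)$ by their colored refinements $\cotp_n(x)$ throughout; both implications then run almost verbatim, with the colors doing the separating work that minimality did before.

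For the forward direction, I would take the canonical coloring to assign to each vertex of level~$n$ its own distinct color, so that the color of an element of $\xi_n$ records exactly the vertex of $\Gamma$ through which the corresponding paths pass at level~$n$. I then claim that $\cotp_n(x)$ determines the initial segment $x[n]$: its root color singles out the vertex $w$ of level~$n$ lying on~$x$, hence the tree $\ot(w)$, and the marked leaf together with the colored tree structure pins down $x[n]$ as a path to~$w$. Since two distinct infinite paths split at some finite level, at that level the corresponding colored trees differ, so the partitions $\overline{c(\xi_n)}$ separate points mod~$0$; being a refining sequence that separates points, they form a basis, and the canonical colored filtration is combinatorially definite.

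For the converse I would mimic the construction in the proof of Proposition~\ref{prop1}. Define the vertices of level~$n$ of $\Gamma$ to be the distinct types of colored ordered trees underlying $\cotp_n(x)$ (the colored tree without its marked leaf), join two vertices of neighbouring levels by an edge when these colored trees are nested, and order the edges entering each vertex by the order of the tree. Associating with $x\in X$ the path through the vertices of the colored trees $\cotp_n(x)$, with edges recording the embeddings $\cotp_n(x)\hookrightarrow\cotp_{n+1}(x)$, gives a map $X\to\T(\Gamma)$ that is injective mod~$0$ precisely because $\Xi$ is (colored) combinatorially definite. Pushing $\mu$ forward yields a measure $\tilde\mu$; as the order of $\Xi$ goes to the adic order and $\Xi$ is basic (hence semihomogeneous, with almost every orbit of type $\mathbb{Z}$), $\tilde\mu$ is central and essential. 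Finally I would color each vertex of $\Gamma$ by the root color of its colored tree; then the canonical colored filtration of $(\Gamma,\tilde\mu)$ recovers both the tail filtration, isomorphic to $\Xi$ as in Proposition~\ref{prop1}, and the coloring $c[\xi_n]$.

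The routine points — that the number of colors at each level is finite (from local finiteness of $\Xi$ and finiteness of the colorings) and that $\tilde\mu$ is essential central — carry over from the uncolored basic case. The main thing to watch, and the genuinely new feature compared with Proposition~\ref{prop1}, is that the graph produced here need not be minimal: distinct vertices may carry the same uncolored tree $\ot(v)$ and are told apart only by their colors. I therefore expect the crux of the argument to be checking that coloring $\Gamma$'s vertices by their root colors is consistent with the recursive definition of the trees $\ot(v)$, so that the canonical colored filtration genuinely reproduces the colors of $\Xi$ and not merely its underlying (in general non–combinatorially–definite) uncolored filtration.
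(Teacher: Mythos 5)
Your proposal is correct and takes essentially the same approach as the paper: the paper's entire proof of this proposition is the single sentence that it reproduces the proof of Proposition~\ref{prop1}, and your write-up is exactly that adaptation, with the colored trees $\cotp_n(x)$ playing the role of $\otp_n(x)$ in both directions (colors replacing minimality as the separating mechanism). Your closing observation about how to color the vertices of the constructed graph (root colors, rather than a distinct color per vertex) spells out the one delicate point that the paper leaves implicit.
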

\begin{proof}
The proof reproduces the proof of Proposition~\ref{prop1}.
\end{proof}

\begin{proposition}\label{prop2}
For every basic filtration on a Lebesgue space there is a coloring such that the resulting colored basic filtration is combinatorially definite.
\end{proposition}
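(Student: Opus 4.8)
The plan is to construct the coloring by hand so that the colored trees $\cotp_n$ recover a prescribed generating sequence of partitions of $(X,\mu)$. Recall that a colored basic filtration is combinatorially definite precisely when the partitions $\overline{c(\xi_n)}$ (preimages of points under $\cotp_n$) form a basis of $(X,\mu)$. Hence it suffices to fix, in advance, an increasing sequence of finite measurable partitions $\beta_0\le\beta_1\le\cdots$ with $\bigvee_n\beta_n=\varepsilon$ (such a basis exists because $(X,\mu)$ is a Lebesgue space), and then to choose the colorings so that $\overline{c(\xi_n)}\ge\beta_n$ for every $n$. Indeed, this would give $\bigvee_n\overline{c(\xi_n)}\ge\bigvee_n\beta_n=\varepsilon$, which is exactly combinatorial definiteness.

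The coloring I would use is defined level by level as follows. To each element (block) $B$ of $\xi_n$ assign, as its color, the finite word obtained by listing the $\beta_n$-classes of the points of $B$, read off in the linear (adic) order that the basic filtration induces on $B$. This is a legitimate coloring in the sense of the paper, i.e.\ a finite measurable partition of $X/\xi_n$, provided that only finitely many such words occur; and this is exactly where local finiteness is used. Since the filtration is basic, it is semihomogeneous and has finitely many types of conditional measures on the elements of $\xi_n$, which forces the cardinalities of the blocks of $\xi_n$ to be bounded by some $M_n<\infty$. Consequently the color takes values in the finite set $\bigcup_{s\le M_n}\{\text{classes of }\beta_n\}^{\,s}$, and the assignment $B\mapsto(\text{its word})$ is measurable because both $\beta_n$ and the induced order are.

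The reason this choice works is that the colored marked tree $\cotp_n(x)$ determines $\beta_n(x)$. Its root carries, as color, the ordered word of $\beta_n$-values along $\xi_n(x)$, while the marked leaf records the adic rank of $x$ inside $\xi_n(x)$; reading the entry of the word at that rank returns the $\beta_n$-class of $x$. Thus $\cotp_n(x)$ is finer than $\beta_n(x)$, i.e.\ $\overline{c(\xi_n)}\ge\beta_n$ for all $n$, and combinatorial definiteness follows as above. Note that no assumption on the entropy of, or on the automorphism underlying, the filtration is needed, and the argument separates points of $X$ directly through the marked rank rather than through any generator of the successor map.

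The only genuine obstacle I anticipate is keeping each coloring \emph{finite}: the whole construction hinges on the fact that at a fixed level there are boundedly many block sizes and finitely many order/measure types, so that only finitely many words can appear. This is precisely guaranteed by local finiteness together with semihomogeneity, which is built into the notion of a basic filtration. If one tried to drop local finiteness, the same recipe would produce at best a countable coloring, which falls outside the present definition; within the class of basic filtrations, however, the boundedness of block sizes makes the word-coloring valid verbatim, and the verification of its finiteness and measurability is the one point I would treat carefully.
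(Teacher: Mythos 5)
Your proof is correct and is essentially the paper's own argument: the paper likewise fixes a sequence of finite partitions $\zeta_n$ separating points mod~0, colors each element of $\xi_n$ by the ordered word of $\zeta_n$-colors of its points (finiteness of the coloring coming from the bound $a_n$ on block sizes, exactly as in your local-finiteness/semihomogeneity remark), and concludes that the colored marked trees separate points. The only cosmetic difference is that you take the auxiliary partitions increasing and deduce definiteness from the refinement $\overline{c(\xi_n)}\ge\beta_n$, whereas the paper argues the separation of pairs of points directly.
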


\begin{proof}
Let $\Xi = \{\xi_n\}_{n \geq 0}$ be a given basic filtration on a Lebesgue space $(X,\mu)$. Fix a sequence $\{\zeta_n\}_{n \geq 0}$ of finite partitions of the space $(X,\mu)$ that separates points mod~0. Since $\Xi$ is a basic filtration, for every $n \geq 0$ every element of the partition $\xi_n$ is a finite ordered set of size bounded by a constant  $a_n$ depending only on~$n$. Let us regard $\zeta_n$ as a coloring of the points of~$X$ into a~finite number, say $b_n$, colors. Then every element of~$\xi_n$ is a finite ordered colored set; in total, there are at most $\sum_{k=1}^{a_n} b_n^k$ possible colorings. Thus we have defined a coloring of the partition $\xi_n$. The filtration colored in this way is combinatorially definite. Indeed, for almost any two points $x,y \in X$ there is $n$ such that $x,y$ lie in different elements of the partition~$\zeta_n$. But then the colored trees $\otp_n(x)$ and $\otp_n(y)$ are different.
\end{proof}

Propositions~\ref{rem2} and~\ref{prop2} essentially describe the proof of Theorem~\ref{thAR}. The question about a realization of a periodic ergodic automorphism is meaningless, hence we may assume that the automorphism is aperiodic. In this case, we can construct a basic filtration, which is not necessarily combinatorially definite, but, according to Proposition~\ref{prop2}, can be colored in such a way as to become combinatorially definite. Proposition~\ref{rem2} gives a construction of an adic realization of a colored filtration, completing the proof of Theorem~\ref{thAR}.

\section{The universal adic graph}

In this section, we prove a strengthening of Theorem~\ref{thAR}. Namely, we prove that all (aperiodic) automorphisms of a Lebesgue space can be realized on a single special graph endowed with an adic structure; it suffices to vary only a central measure~$\mu$ on its path space. Earlier, a similar result was obtained for the class of automorphisms having the dyadic odometer as a quotient: all such automorphisms can be realized on the so-called graph of ordered pairs (for details, see~\cite{VZ17}).

\subsection{Construction of the uniadic graph}
Consider the following graded graph. Level~$0$ contains a single vertex. Having a set  $V_n$ of vertices of level~$n$, we define a set $V_{n+1}$ of vertices of level~$n+1$ as $V_{n+1} = V_n^2 \sqcup \mathrm{copy}(V_n)$. Every vertex $w\in V_n^2$ is understood as an ordered pair~$(u,v)$ of vertices of level~$n$, and we draw edges from~$u$ and~$v$ to~$w$, endowing them with a natural order: the edge~$(v,w)$ is greater than~$(u,w)$. Every vertex $w\in\mathrm{copy}(V_n)$ is understood as a copy of a vertex~$u$ of level~$n$, and we draw a unique edge from~$u$ to~$w$. The resulting graph endowed with an adic structure will be called the \emph{uniadic} graph and denoted by~$\uag$ (see Fig.~\ref{figuag}).
\begin{figure}[h!]%
\begin{center}
\includegraphics[width=0.4\columnwidth]{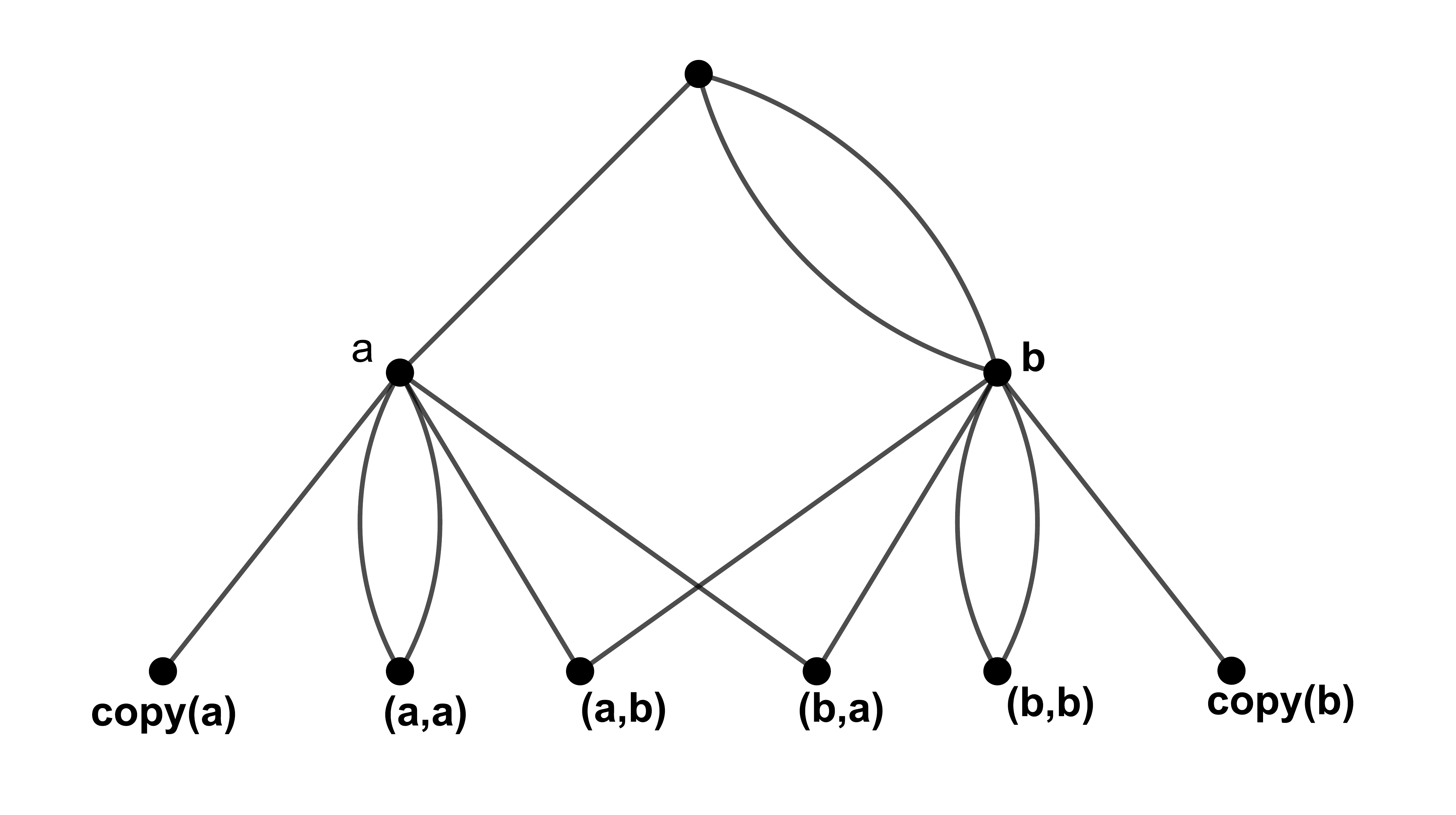}%
\end{center}
\vspace{-20pt}
\caption{Several first levels of the uniadic graph~\uag}%
\label{figuag}%
\end{figure}

Recall the definition of a telescoping of a graded graph.

\begin{definition}
Let $\Gamma$ be a graded graph and $\{k_n\}_{n\geq 0}$ be a strictly increasing sequence of nonnegative integers with $k_0=0$. We define a telescoping of~$\Gamma$ as follows. The vertices of level~$n$ in the new graph correspond to the vertices of level~$k_n$ in~$\Gamma$, and two vertices of neighboring levels in the new graph are joined by an edge of multiplicity equal to the number of paths in~$\Gamma$ between the corresponding vertices. An adic order on the edges of the new graph is determined by the adic order on the corresponding paths in the original graph.
\end{definition}

\begin{definition}
We say that a graded graph $\Gamma_1$ is an induced subgraph of a graded graph $\Gamma$ if the set of vertices and the set of edges of~$\Gamma_1$ are subsets of the set of vertices and and the set of edges of~$\Gamma$, respectively, and, besides, if $v$ is a vertex of~$\Gamma_1$, then $\Gamma_1$ contains all edges of~$\Gamma$ coming to~$v$ from vertices of the previous level. An order on the edges is inherited in a natural way.
\end{definition}

The following properties are clear from definitions.

\begin{remark}
If a graded graph $\Gamma_1$ is an induced subgraph of a graded graph~$\Gamma$, then the space of infinite paths in $\Gamma_1$ is a subset of the space of infinite paths in~$\Gamma$ invariant under the adic shift on~$\Gamma$.
\end{remark}

\begin{remark}
If a graded graph $\Gamma_1$ is a telescoping of a graded graph $\Gamma$, then the adic shifts on these graphs are isomorphic.
\end{remark}

\subsection{The universality theorem}
The uniadic graph $\uag$  is universal in the following sense.

\begin{proposition}\label{prop3} Let $\Gamma$ be a graded graph endowed with an adic structure such that every vertex has at least two edges entering it from above. Then there exists an induced subgraph~$\Gamma_1$ of $\uag$ such that some telescoping of~$\Gamma_1$ is isomorphic to $\Gamma$ (with the isomorphism respecting the adic order).\footnote{Proposition~\ref{prop3} is, in a sense, akin to the lacunary theorem (see~\cite{V68,V17}): an appropriately thinned filtration becomes combinatorially definite (in the lacunary theorem, standard).}
\end{proposition}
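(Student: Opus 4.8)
The plan is to embed $\Gamma$ into $\uag$ up to telescoping by replacing each vertex of $\Gamma$ with a small \emph{gadget} built from the two elementary operations available in $\uag$ — forming ordered pairs and taking copies — and then telescoping along the levels at which these gadgets begin and end. I would argue inductively on the level $n$, maintaining an injective assignment $v \mapsto \hat v$ of the vertices of level $n$ in $\Gamma$ to vertices of a fixed level $k_n$ of $\uag$, with $k_0=0$ and $\hat v$ the unique vertex of level $0$. The induced-subgraph requirement is then automatic: every pair vertex $(u,v)$ and every copy vertex of $\uag$ carries exactly the one or two edges entering it, so as long as the gadgets are built bottom-up and the parents of each included vertex are included, $\Gamma_1$ is induced.

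The core construction realizes a prescribed \emph{ordered} multiset of sources. Fix a vertex $w$ of level $n+1$ in $\Gamma$ and list its incoming edges in adic order as coming from $v_1\le v_2\le\dots\le v_k$ (repetitions allowed; $k\ge 2$ by hypothesis). I would form the left comb of pairs $p_2=(\hat v_1,\hat v_2)$ and $p_j=(p_{j-1},\,\mathrm{copy}^{\,j-2}(\hat v_j))$ for $3\le j\le k$, where $\mathrm{copy}^{\,m}(\hat v)$ is the $m$-fold iterated copy used to lift $\hat v_j$ to the level of $p_{j-1}$. Since in a pair $(A,B)$ the edge into the first component is the smaller one, the ordered sequence of paths from level $k_n$ to the root $p_k$ is exactly $v_1,v_2,\dots,v_k$, reproducing the adic order and the in-degree at $w$. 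Repeated sources cause no trouble: a chain vertex $\mathrm{copy}^{\,m}(\hat v)$ may be simultaneously paired into the comb and continued upward to $\mathrm{copy}^{\,m+1}(\hat v)$, so the single chain above $\hat v$ can be tapped at several heights. The gadget has height $k-1$.

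The delicate point — which I expect to be the main obstacle — is that all gadgets for the vertices of level $n+1$ must terminate at one common uniadic level $k_{n+1}$, and their roots must be pairwise distinct, so that telescoping recovers $\Gamma$ and not a proper quotient of it. Equalizing heights by stacking copies on top of the combs does not suffice, because two distinct vertices of $\Gamma$ may have identical ordered source multisets, and a copy-padding of identical combs yields identical vertices. I would instead realize each $w$ at one common height $H_n$ and separate vertices sharing a source multiset by using \emph{distinct shapes}: vary the bracketing of the comb and the placement of the copy steps along its branches while keeping the left-to-right leaf order fixed. This preserves the ordered path multiset and the in-degree but changes the vertex, and the number of pairwise distinct height-$H_n$ realizations of a fixed ordered source multiset grows without bound with $H_n$ (already for $k=2$ one obtains a fresh realization at each extra level, by inserting the copy on a branch rather than above the pair). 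Since realizations of different source multisets are automatically distinct — a uniadic vertex determines its whole ordered down-tree — for $H_n$ large enough one can assign pairwise distinct roots to the finitely many vertices of level $n+1$. This is exactly where the hypothesis that every vertex has in-degree at least two is needed: a vertex of in-degree one would have to be realized by a single-path, hence copy, vertex, and copy vertices over a fixed source are unique at each level, so two distinct in-degree-one vertices sharing a source could never be separated.

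Finally I would set $k_{n+1}=k_n+H_n$ and take $\Gamma_1$ to be the induced subgraph of $\uag$ spanned by all gadget vertices constructed above. One checks that at each telescoping level $k_n$ the vertices of $\Gamma_1$ are exactly the roots $\hat v$, with no stray intermediate copy- or pair-vertices, since the copy-chains inside a gadget are built only as high as the last pairing, strictly below the root level. Telescoping $\Gamma_1$ along $\{k_n\}$ then produces, between consecutive levels, edge multiplicities equal to the numbers of gadget paths — hence to the in-degrees in $\Gamma$ — with the induced adic order matching that of $\Gamma$, so the telescoping is isomorphic to $\Gamma$ as an ordered graded graph. Local finiteness of $\Gamma$ (finitely many vertices per level) guarantees $H_n<\infty$ at every stage, which completes the construction.
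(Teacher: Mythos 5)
Your proof is correct and follows essentially the same route as the paper: both decompose each vertex's ordered list of incoming edges into pair- and copy-vertices spread over inserted intermediate levels (the paper does this one edge at a time, which unravels to exactly your left combs with copy-padding), and both then recover $\Gamma$ by telescoping along the chosen levels. Your shape-variation argument for separating distinct vertices that share the same ordered ancestor list---and your observation that this is precisely where the hypothesis of in-degree at least two is needed---makes explicit the step the paper compresses into the phrase ``first, by an appropriate thinning, we ensure that no level contains two vertices with the same ancestors.''
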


\begin{proof}
To prove this, it suffices to realize that for every $n\geq 0$, the bipartite graph formed by the $n$th and $(n+1)$th level of~$\Gamma$ can be extended by several intermediate levels so that the resulting graded graph (with finitely many levels) satisfies the following two properties: every vertex of every level (except the topmost one) has either one edge  or an ordered pair of edges entering it from above,
 and no two vertices have the same ancestors (taking into account the order); a telescoping of this graph coincides with the original bipartite graph.

The existence of such a thinning of the bipartite graph can be proved as follows. First, by an appropriate thinning, we ensure that no level contains two vertices with the same ancestors. The rest can be easily proved by induction on the number of edges in the bipartite graph. If some vertex~$w$ of the bottom level has more than two incoming edges, say from vertices $v_1,\dots,v_k$ (taking into account the order), then insert an intermediate level consisting of a copy of the top level with one additional vertex~$(v_1,v_2)$. Join this vertex by two upward edges with $v_1$ and $v_2$, exactly in this order. Join all vertices of the bottom level except~$w$ with copies of vertices of the top level as in the original bipartite graph. Finally, join $w$ with the vertex $(v_1,v_2)$ and the copies of the other vertices $v_i$, $i=3,\dots,k$ (see Fig.~\ref{fig2}). The path space of the new graph is isomorphic to the path space of the original graph, but now it remains to construct a thinning of a bipartite graph with one edge less.

\begin{figure}[h!]
\begin{center}
\includegraphics[width=0.45\columnwidth]{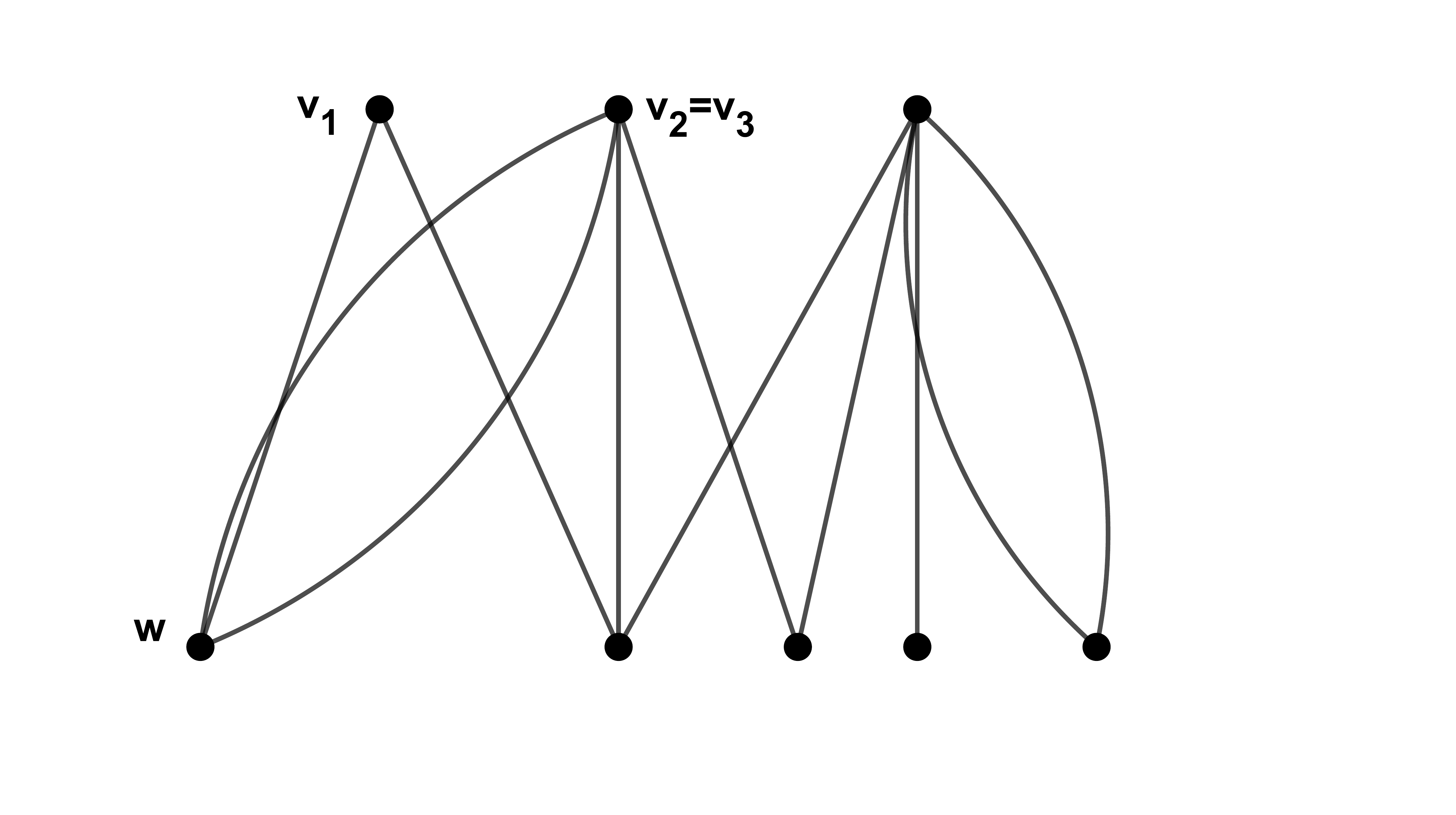}%
\includegraphics[width=0.45\columnwidth]{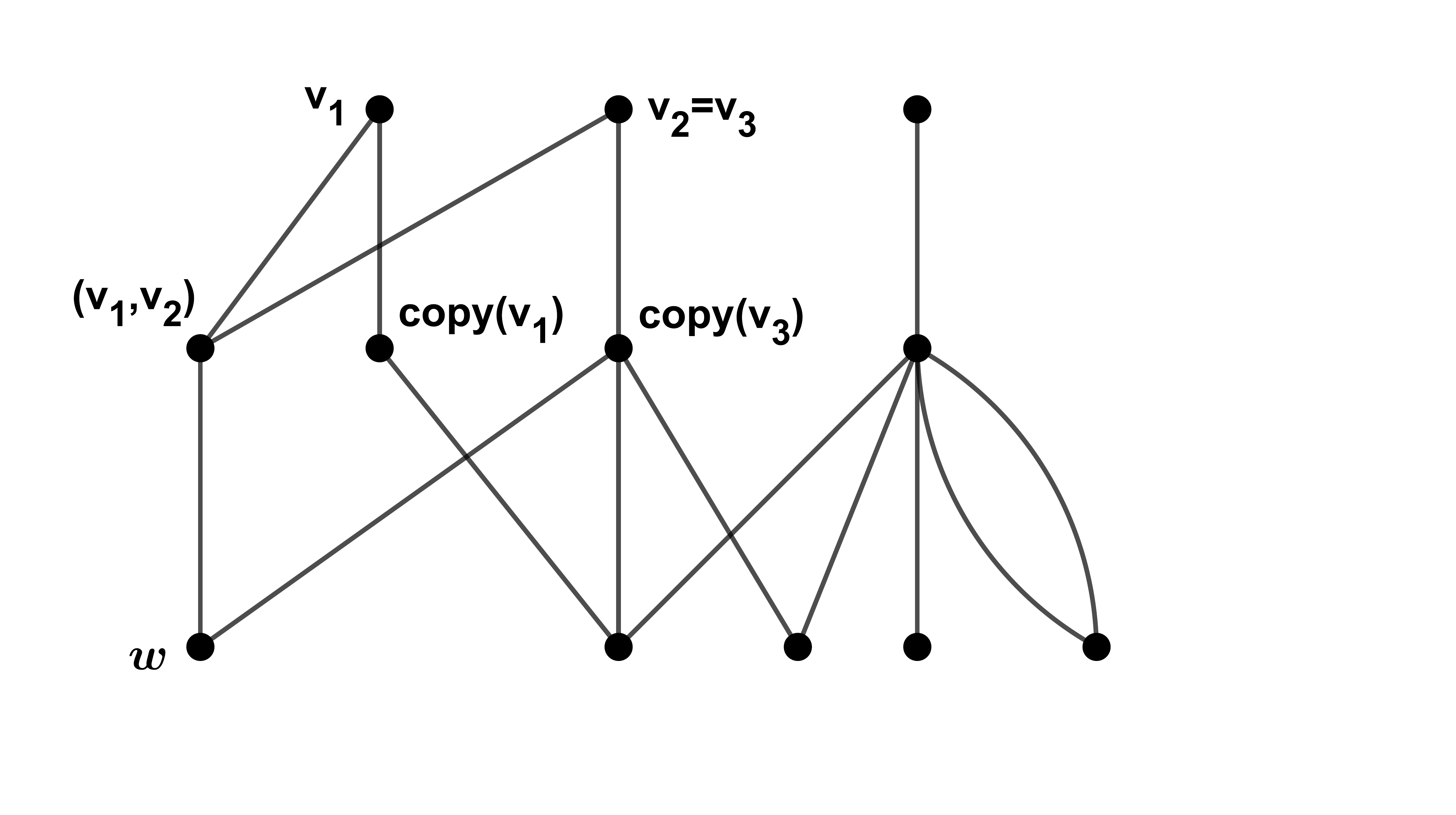}%
\end{center}
\vspace{-20pt}
\caption{Thinning a bipartite graph}%
\label{fig2}%
\end{figure}
\end{proof}

It is not difficult to see that  in Theorem~\ref{thAR}, given an aperiodic ergodic automorphism $T$, one can require that the constructed graded graph~$\Gamma$ satisfies the conditions of Proposition~\ref{prop3} (see~\cite{V82} and~\cite{VZ18}). This proposition allows one to embed the path space~$\T(\Gamma)$ of such a graph into~$\T(\uag)$, and thus we arrive at the following theorem.

\begin{theorem}[metric universality of the uniadic graph]\label{th1}
For every ergodic automorphism~$T$ of a~Lebesgue space there is a central measure~$\mu$ on the path space $\T(\uag)$ of the uniadic graph~$\uag$ such that the adic shift on the space $(\T(\uag),\mu)$ is isomorphic to~$T$.
\end{theorem}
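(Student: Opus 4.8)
The plan is to assemble the theorem from the adic realization result and the universality of the uniadic graph, the genuine content being already contained in Proposition~\ref{prop3}; what remains is to transport the measure correctly and to check that centrality survives the transport.

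First I would apply Theorem~\ref{thAR} to the given ergodic automorphism $T$. Since $T$ acts on a Lebesgue space with continuous measure, ergodicity forces aperiodicity, so by the observation preceding the theorem one may realize $T$ as the adic shift on $(\T(\Gamma),\mu_\Gamma)$ for a graded graph $\Gamma$ with an adic structure and a central measure $\mu_\Gamma$, where in addition $\Gamma$ satisfies the hypothesis of Proposition~\ref{prop3}: every vertex has at least two edges entering it from above. Applying Proposition~\ref{prop3} then yields an induced subgraph $\Gamma_1\subset\uag$ together with a telescoping of $\Gamma_1$ that is isomorphic, respecting the adic order, to $\Gamma$.

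Next I would chain the two remarks. By the remark on telescopings, the telescoping identification is an order isomorphism of path spaces $\T(\Gamma_1)=\T(\mathrm{Tel}(\Gamma_1))\cong\T(\Gamma)$ under which the adic shifts correspond; thus $\mu_\Gamma$ may be read directly as a measure $\mu_1$ on $\T(\Gamma_1)$ whose adic shift is isomorphic to $T$. By the remark on induced subgraphs, $\T(\Gamma_1)$ is an adic-invariant Borel subset of $\T(\uag)$, so extending $\mu_1$ by zero produces a measure $\mu$ on $\T(\uag)$ supported on this invariant subset; the adic shift on $(\T(\uag),\mu)$ therefore coincides, on the support, with that on $(\T(\Gamma_1),\mu_1)$ and is isomorphic to~$T$.

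The one step requiring care — and hence the part I would write out in full — is that $\mu$ is central for $\uag$. This splits into two checks. For the telescoping identification one must verify that a measure central for the coarse graph $\Gamma=\mathrm{Tel}(\Gamma_1)$ is also central for the finer graph $\Gamma_1$: centrality for $\Gamma$ says that all $\Gamma_1$-paths from the root to a vertex at a telescoped level $k_n$ carry equal cylinder mass, and for a vertex $u$ at an intermediate level $m$ with $k_n<m<k_{n+1}$ one recovers equality of the cylinder masses of two root-to-$u$ paths $p,q$ by extending both by a common segment to level $k_{n+1}$ and summing over all such extensions, applying equality at level $k_{n+1}$ termwise. For the inclusion into $\uag$ one uses that $\Gamma_1$ is induced: every $\uag$-edge into a vertex of $\Gamma_1$, and hence every $\uag$-path to it, already lies in $\Gamma_1$, so the conditional measures defining centrality at such vertices are unchanged, while vertices outside $\Gamma_1$ carry no mass and impose no condition. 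Assembling these facts, $\mu$ is central and the adic shift on $(\T(\uag),\mu)$ is isomorphic to~$T$. The main obstacle, Proposition~\ref{prop3} itself, has already been overcome; the residual difficulty here is purely the bookkeeping of centrality under telescoping and restriction.
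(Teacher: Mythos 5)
Your proposal follows exactly the paper's route: realize $T$ via Theorem~\ref{thAR} on a graph~$\Gamma$ satisfying the hypotheses of Proposition~\ref{prop3}, use that proposition together with the two remarks on telescopings and induced subgraphs to identify $\T(\Gamma)$ with an adic-invariant subset of $\T(\uag)$, and transport the central measure. The only difference is that the paper treats the preservation of centrality under telescoping and under inclusion of an induced subgraph as immediate, while you write out that bookkeeping explicitly; your verification is correct, so the proofs coincide in substance.
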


\begin{corollary}\label{cor1}
For every ergodic automorphism~$T$ of a~Lebesgue space  $(X,\nu)$ there is a combinatorially definite basic filtration~$\Xi$ on~$(X,\nu)$.
\end{corollary}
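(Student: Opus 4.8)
The plan is to combine the metric universality of the uniadic graph (Theorem~\ref{th1}) with the characterization of combinatorial definiteness via minimal graphs (Proposition~\ref{prop1}). Since $T$ is ergodic on a Lebesgue space with continuous measure, it is aperiodic mod~$0$, so Theorem~\ref{th1} applies and produces a central measure $\mu$ on $\T(\uag)$ such that the adic shift on $(\T(\uag),\mu)$ is metrically isomorphic to $T$ on $(X,\nu)$. Because this adic shift is an automorphism defined mod~$0$, almost every path must have both a successor and a predecessor, i.e.\ $\mu$ is an \emph{essential} central measure; hence, by the discussion preceding Proposition~\ref{prop1}, the tail filtration $\Xi$ of $\uag$ is a basic filtration on $(\T(\uag),\mu)$.

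The crux is to verify that the uniadic graph $\uag$ is minimal in the sense of Definition~\ref{mingraph}, i.e.\ that the map $\ot$ is injective on each level $V_n$. I would prove this by induction on $n$. The base case $n=0$ is trivial. For the inductive step I would first separate vertices of $V_{n+1}$ by the shape of the root of $\ot$: a pair vertex $(u,v)\in V_n^2$ yields a tree whose root has two (ordered) outgoing edges, carrying $\ot(u)$ and $\ot(v)$ respectively, whereas a copy vertex yields a tree whose root has a single outgoing edge. Thus a pair vertex and a copy vertex never produce the same tree. Two pair vertices $(u,v)$ and $(u',v')$ give equal ordered trees only if $\ot(u)=\ot(u')$ and $\ot(v)=\ot(v')$, which by the inductive hypothesis forces $u=u'$ and $v=v'$; similarly two copy vertices give equal trees only if the underlying level-$n$ vertices coincide. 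This establishes injectivity of $\ot$ on $V_{n+1}$ and completes the induction, so $\uag$ is minimal.

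With minimality in hand, the ``if'' direction of Proposition~\ref{prop1} shows that the tail filtration $\Xi$ of the minimal graph $\uag$, equipped with the essential central measure $\mu$, is combinatorially definite. Finally, I would transport $\Xi$ through the isomorphism of Theorem~\ref{th1} to obtain a filtration on $(X,\nu)$; since the isomorphism carries the adic shift to $T$, it respects the induced order and the uniformity of the conditional measures, so the transported filtration is again basic, and combinatorial definiteness (being a metric-isomorphism invariant) is preserved. This yields the desired combinatorially definite basic filtration on $(X,\nu)$.

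I expect the main content of the argument to be the minimality verification, since everything else is an immediate assembly of previously established results; the only point requiring care is tracking the ordered-tree structure at the root, which is precisely what separates pair vertices from copy vertices and distinct pairs from one another.
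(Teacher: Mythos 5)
Your proposal is correct and follows essentially the same route as the paper: invoke Theorem~\ref{th1} to realize $T$ as the adic shift on $(\T(\uag),\mu)$, note that $\uag$ is minimal, and apply Proposition~\ref{prop1}. The paper simply asserts minimality of $\uag$ without proof, whereas you supply the (correct) inductive verification and also explicitly check essentiality of $\mu$; these are welcome details, not deviations in approach.
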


\begin{proof}
Use Theorem~\ref{th1} to find a central measure $\mu$ on the path space $\T(\uag)$ of the uniadic graph~$\uag$ such that the adic shift on the space
 $(\T(\uag),\mu)$ is isomorphic to~$T$. The graph $\uag$ is minimal in the sense of Definition~\ref{mingraph}, hence, by Proposition~\ref{prop1}, its tail filtration is combinatorially definite for the adic shift isomorphic to~$T$.
\end{proof}

Theorem~\ref{th1} essentially states that for every aperiodic automorphism~$T$ of a Lebesgue space  $(X,\nu)$ there is a measurable mod~0 injective map~$f$ from $X$ to the path space $\T(\uag)$ of the uniadic graph that sends $T$ to the adic shift. If we fix the space $X$ and the transformation $T$ and vary the invariant measure~$\nu$, then the map $f$ described above will, in general, vary, since the construction of an adic realization of a given automorphism depends on the measure (see~\cite{V82}). But can one make the map $f$ independent of the measure~$\nu$?

Arguing in this way, we arrive at the following ``Borel'' question, which was raised in~\cite{VZ17}. Let $X$ be a standard Borel space and $T$ be an (aperiodic) Borel automorphism of~$X$. Does there exist a graded graph~$\Gamma$ endowed with an adic structure and a Borel-measurable embedding~$f$ of~$X$ into the path space~$\T(\Gamma)$ that sends $T$ to the adic shift?

It turns out that the answer to this question is positive. If $T$ is an aperiodic Borel automorphism of a separable metric space~$X$, then one can construct such a graph $\Gamma$ and such an embedding~$f$. Moreover, by Proposition~\ref{prop3}, the same uniadic graph~$\uag$ can serve as a Borel universal graph.

\begin{theorem}[Borel universality of the uniadic graph, \cite{VZ18}]\label{thborel}
Let $T$ be an aperiodic Borel automorphism of a separable metric space~$X$. Then there exists a Borel subset $\hat X \subset X$ such that $\mu(\hat X)=1$ for every $T$-invariant Borel measure on~$X$ and a Borel-measurable injective map~$f$ from $\hat X$ to the path space of the uniadic graph that sends $T$ to the adic shift.
\end{theorem}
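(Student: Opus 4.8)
The plan is to transcribe the measure-theoretic proof of Theorem~\ref{th1} into the Borel category, replacing every ``mod~$0$'' assertion by a genuinely Borel statement valid on a \emph{single} invariant Borel set $\hat X$ that is co-null for all $T$-invariant measures at once. Concretely, I would (i)~construct a \emph{Borel} basic filtration for $T$ on $\hat X$, (ii)~color it by a countable point-separating family so that it becomes combinatorially definite \emph{everywhere} on $\hat X$, and (iii)~feed the resulting colored filtration into the realization procedure of Proposition~\ref{rem2} and the universality of $\uag$ from Proposition~\ref{prop3}. Steps~(ii) and~(iii) are essentially formal once~(i) is in hand; the genuine work is in~(i), where the freedom to discard a null set tailored to one fixed measure is no longer available.

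For step~(i) I would invoke the Borel analog of Rokhlin's lemma, i.e.\ the marker lemma for aperiodic Borel $\mathbb Z$-actions (as in~\cite{VZ18}). This supplies a nested sequence $M_1\supseteq M_2\supseteq\cdots$ of Borel complete sections with $\bigcap_n M_n=\varnothing$ whose intermarker gaps tend to infinity. Cutting each orbit at the points of $M_n$ produces a Borel partition $\xi_n$ of the orbit into finite $T$-intervals; nestedness of the $M_n$ makes $\{\xi_n\}$ a coarsening sequence, and $\bigcap_n M_n=\varnothing$ forces $\bigwedge_n\xi_n$ to be the orbit partition of~$T$ (any two points of one orbit eventually cease to be separated by a marker). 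Setting $y<Ty$ orders each interval, so $\{\xi_n\}$ is a basic filtration in the sense of Definition~\ref{def1} whose order is induced by~$T$. I take $\hat X$ to be the invariant Borel set on which each orbit is bi-infinite (automatic by aperiodicity) and meets every $M_n$ infinitely often in both directions; by Poincar\'e recurrence applied to each complete section, $\hat X$ is co-null for every $T$-invariant Borel measure, and being a countable intersection of invariant Borel sets it is itself a fixed Borel set, so $\mu(\hat X)=1$ for all invariant $\mu$ simultaneously.

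For step~(ii), separability of $X$ furnishes a countable basis and hence a sequence $\{\zeta_n\}$ of finite Borel partitions separating the points of $X$ \emph{everywhere}, not merely mod~$0$. Coloring $\xi_n$ by $\zeta_n$ exactly as in the proof of Proposition~\ref{prop2} turns $\{\xi_n\}$ into a colored basic filtration whose colored trees $\cotp_n$ distinguish any two distinct points of $\hat X$: if $x\neq y$ then some $\zeta_n$ separates them, whence $\cotp_n(x)\neq\cotp_n(y)$. Thus the Borel map sending $x$ to the path through the graded graph whose level-$n$ vertex is the colored tree $\ot_n(x)$ is injective on $\hat X$, and, because the filtration's order is the one induced by~$T$, it intertwines $T$ with the adic shift. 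Proposition~\ref{rem2} realizes $\{\xi_n\}$ as the canonical colored filtration of a graded graph $\Gamma$ carrying an essential central measure; after a harmless telescoping ensuring that every vertex of $\Gamma$ has at least two incoming edges, Proposition~\ref{prop3} embeds $\T(\Gamma)$ as an adic-order-preserving induced subgraph of a telescoping of $\uag$. Composing yields the desired Borel injection $f\colon\hat X\to\T(\uag)$ carrying $T$ to the adic shift.

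The main obstacle is step~(i): making the Rokhlin tower construction Borel and, crucially, valid off one fixed Borel set co-null for \emph{every} invariant measure. The measure-theoretic proof refines each tower on a null set adapted to a single fixed measure, whereas here the nested markers must be produced purely combinatorially and recurrence verified uniformly in the measure, which is precisely what the Borel marker lemma delivers. Everything downstream---the coloring, the combinatorial definiteness, and the embedding into $\uag$---then transfers verbatim from the arguments already established in Propositions~\ref{prop1}--\ref{prop3}.
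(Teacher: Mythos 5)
Your overall architecture coincides with the paper's sketch: a measure-free Rokhlin-type statement produces a Borel basic filtration on an invariant Borel set $\hat X$ that is co-null for all invariant measures simultaneously, the filtration is colored by a point-separating sequence of finite partitions exactly as in Proposition~\ref{prop2}, and the colored filtration is pushed into $\T(\uag)$ via Proposition~\ref{prop3}. The paper's first step is the Glasner--Weiss Borel version of Rokhlin's lemma (Lemma~\ref{lemGV}), weakened and then iterated with passage to derivative automorphisms; you substitute the Slaman--Steel marker lemma. Your treatment of $\hat X$ itself (bi-infinite recurrence via Poincar\'e recurrence and wandering-set arguments, valid uniformly over all invariant probability measures) is sound.

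The genuine gap is in your step~(i), and it propagates to step~(iii). The marker lemma you invoke controls intermarker gaps only from \emph{below} (``gaps tend to infinity''), so cutting orbits at the $M_n$ produces elements of $\xi_n$ that are finite but of unbounded cardinality. Two consequences: (a)~the filtration is not locally finite --- the paper's definition of a basic filtration requires finitely many types of conditional measures on elements of $\xi_n$, i.e., uniformly bounded element sizes at each level; (b)~fatally, the graded graph whose level-$n$ vertices are the colored trees $\cotp_n(x)$ then has \emph{infinitely} many vertices at each level. Every level of $\uag$ is finite, hence so is every level of any induced subgraph of $\uag$ and of any telescoping thereof; a graph with an infinite level therefore admits no realization inside $\uag$, and Proposition~\ref{prop3}, whose proof is an induction on the number of edges of a finite bipartite graph, cannot be applied. (Your ``harmless telescoping'' to force two incoming edges per vertex also tacitly presupposes bounded element sizes.) The missing idea is precisely what the paper's sketch stresses: the Rokhlin-type lemma must be applied with a fixed, bounded tower height at each stage, passing to the derivative automorphism on the base before the next application, so that level-$n$ elements are uniformly bounded in size. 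In marker language, you need nested Borel sections $M_n$ whose gaps lie in intervals $[k_n,K_n]$ with $K_n<\infty$ (for instance, maximal sections constructed through the induced transformations); then $\bigcap_n M_n$ meets each orbit in at most one point, i.e., is a partial Borel transversal, hence null together with its saturation for every finite invariant measure, and removing that saturation from $\hat X$ restores the empty-intersection property your argument needs. With this repair the coloring, injectivity, adic intertwining, and embedding steps go through as you describe.
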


A complete proof of this theorem will be published separately (see~\cite{VZ18}). Here we give only a sketch of the proof. It essentially follows the proof of Theorem~\ref{thAR}, except that all steps of the construction should now be Borel, i.e., independent of the measure.

The first step of the proof, as in the case of Theorem~\ref{thAR}, is a weakening of Rokhlin's lemma. A~Borel version of Rokhlin's lemma, unlike the classical one, seems far from being a trivial problem. For example, if we consider the shift $T$ on the space $\{0,1\}^\mathbb{Z}$, it says that for every  $\eps>0$ there is a~Borel subset~$B\subset\{0,1\}^\mathbb{Z}$ such that $B\cap TB=\emptyset$ and  for every $T$-invariant aperiodic measure $\mu$ on $\{0,1\}^\mathbb{Z}$, we have~$\mu(B\cup TB)>1-\eps$. The problem of finding a measure-free proof of the lemma, i.e., proving its Borel version, was posed by V.~A.~Rokhlin in a conversation with the first author.

A Borel version of Rokhlin's lemma was proved by B.~Weiss and E.~Glasner~\cite[p.~628, Proposition~7.9]{GV06}.

\begin{lemma}\label{lemGV}
Let $T$ be a homeomorphism of a Polish space $(X, \rho)$. Let $n \in \mathbb{N}$ and $\eps>0$. Then there exists a Borel subset $B\subset X$ such that the sets $B, TB, \dots, T^{n-1}B$ are pairwise disjoint and
$$
\mu(B\cup TB \cup \dots \cup T^{n-1}B)>1-\eps
$$
for every $T$-invariant aperiodic measure $\mu$ on $X$.
\end{lemma}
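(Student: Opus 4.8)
The plan is to deduce the lemma from the \emph{marker lemma}, which produces, in a Borel and measure-free way, a sparse complete section, and then to fill the gaps between consecutive markers with Rokhlin columns of height~$n$. Concretely, set $N:=\lceil n/\eps\rceil$ and suppose we are given a Borel set $M\subset X$ that is \emph{$N$-separated}, i.e.\ $M\cap T^iM=\emptyset$ for $1\le i<N$, and that is a \emph{complete section}, i.e.\ $\bigcup_{i\in\mathbb Z}T^iM=X$. Along each aperiodic orbit the points of $M$ then cut the orbit into consecutive finite blocks, each of length at least $N$; the base $B$ of the tower will consist of those points whose height above the nearest marker below is divisible by $n$ and which are low enough for a full column of height $n$ to fit below the next marker.

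Carrying this out: for $x$ lying in a finite block, let $h(x)\ge 0$ be the number of backward steps to the first marker at or below $x$ (so $T^{-h(x)}x\in M$ and $T^{-j}x\notin M$ for $0\le j<h(x)$), and let $g(x)$ be the length of the block containing $x$. Both are Borel functions once $M$ is fixed. Define
\[
B=\{x:\ h(x)\equiv 0 \pmod n \ \text{ and }\ h(x)+n\le g(x)\}.
\]
In a block of length $g=qn+r$, $0\le r<n$, the bases sit at positions $0,n,\dots,(q-1)n$, and the columns $T^iB$, $0\le i<n$, tile the positions $0,\dots,qn-1$ without crossing a marker; hence $B,TB,\dots,T^{n-1}B$ are pairwise disjoint. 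The uncovered set $U=X\setminus\bigcup_{i=0}^{n-1}T^iB$ consists, in each block, of the top $r<n$ points, i.e.\ the points whose forward distance to the next marker is at most $r$. Consequently $U\subseteq\bigcup_{i=1}^{n-1}T^{-i}M$.

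The measure estimate is now immediate and uniform. Let $\mu$ be any $T$-invariant aperiodic measure. Since $M$ is a complete section and $\mu(X)=1$ forces $\mu(M)>0$, Poincar\'e recurrence for the first-return map on $M$ guarantees that $\mu$-almost every orbit meets $M$ infinitely often in both directions, so the blocks are finite a.e.\ and the construction is defined $\mu$-a.e. By invariance, $\mu(U)\le\sum_{i=1}^{n-1}\mu(T^{-i}M)=(n-1)\mu(M)$, while $N$-separation makes $M,TM,\dots,T^{N-1}M$ pairwise disjoint, whence $N\mu(M)\le 1$. Therefore $\mu(U)\le (n-1)/N<\eps$, which is exactly $\mu(B\cup TB\cup\dots\cup T^{n-1}B)>1-\eps$. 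Crucially, $B$ depends only on $M$, not on $\mu$, so the single set $B$ works for all aperiodic invariant measures simultaneously.

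Everything above is soft; the entire difficulty is concentrated in the marker lemma, i.e.\ the Borel construction of an $N$-separated complete section $M$ valid for all measures at once, and this is where the Polish topology, rather than the mere standard-Borel structure, is used. The idea is to fix a countable basis $\{U_k\}$ of $X$ and to select markers greedily in order of $k$: tentatively take the points of $U_k$, discard those lying within $N$ steps (forward or backward) of a previously accepted marker, and use the index $k$ as a priority to break ties. Two points need care: that the selection be Borel, and that no two accepted points of a single $U_k$ land within $N$ of each other in a common orbit, the latter being handled by first thinning each basis set. Making this rigorous so as to obtain both $N$-separation and maximality (every point within $N$ of $M$, giving completeness) is the main obstacle; it is precisely the content of the Glasner--Weiss argument, which I would either reproduce or invoke as the marker lemma for aperiodic homeomorphisms of Polish spaces.
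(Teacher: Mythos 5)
The paper does not actually prove this lemma: it is quoted from Glasner--Weiss \cite[Proposition~7.9]{GV06}, so there is no internal proof to compare against step by step. Your route --- a marker lemma producing a Borel $N$-separated complete section $M$, followed by filling the gaps between markers with columns of height $n$ --- is the standard way such Borel Rokhlin lemmas are established, and the soft part of your argument is correct: with $M$ given, your set $B$ is Borel, the sets $B,TB,\dots,T^{n-1}B$ are pairwise disjoint \emph{pointwise} (heights mod $n$ within a block), the uncovered set satisfies $U\subseteq\bigcup_{i=1}^{n-1}T^{-i}M$ up to a set that is null for every invariant measure, and the uniform estimate $\mu(U)\le(n-1)\mu(M)\le (n-1)/N<\eps$ with $N=\lceil n/\eps\rceil$ holds simultaneously for all aperiodic invariant $\mu$ because $B$ does not depend on $\mu$.

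There is, however, one genuine flaw in your setup: the lemma does not assume $T$ aperiodic --- only the \emph{measures} are aperiodic --- and the object you posit cannot exist in general. If $T$ has a periodic orbit of period $p<N$, then no $N$-separated set can meet that orbit (a point $x\in M$ on it would give $x=T^px\in M\cap T^pM$), so $M$ cannot satisfy $\bigcup_{i\in\mathbb{Z}}T^iM=X$. The repair is short but must be said: let $\mathrm{Per}=\{x\in X:\ T^px=x\ \text{for some}\ p\ge 1\}$, a Borel $T$-invariant set with $\mu(\mathrm{Per})=0$ for every $T$-invariant aperiodic measure $\mu$; apply the marker lemma to the aperiodic Borel automorphism $T$ restricted to $X\setminus\mathrm{Per}$ and run your construction there, so that $B\subset X\setminus\mathrm{Per}$. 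With this correction the argument goes through verbatim. A second, minor, inaccuracy: the marker lemma does not need the Polish topology at all; it holds for any aperiodic Borel automorphism of a standard Borel space (Slaman--Steel, see also Kechris--Miller), so the ``entire difficulty'' you isolate is in fact a standard citable fact rather than the place where topology enters. Once both points are incorporated, your proof is a legitimate, essentially self-contained alternative to the paper's bare citation, and in fact yields the slightly stronger purely Borel statement.
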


To prove Theorem~\ref{thborel}, as in the case of Theorem~\ref{thAR}, one should first weaken this version of the lemma by dropping the periodicity condition for approximating automorphisms, and then apply it  repeatedly, passing after each iteration to the derivative automorphism on the constructed set~$B$. In the language of filtrations, the proof consists in constructing a Borel basic filtration of the automorphism~$T$, coloring it, and embedding the colored filtration into the path space of the uniadic graph. For details, see~\cite{VZ18}.

\end{document}